\numberwithin{equation}{section}
\numberwithin{figure}{section}
\newcommand\ourqedold{\relax\ifmmode\Box\else
{\unskip\nobreak\hfil\penalty50\hskip1em\null\nobreak\hfil\qedsymbol
\parfillskip=\z@\finalhyphendemerits=0\endgraf}\fi}
\newcommand{\bfzero}{{\bf 0}}
\newcommand{\homP}{{\mathsf{HomP}}}
\newcommand{\op }[1]{\mathsf{#1}}
\newcommand{\Tree}{{\mathsf{T r e e}}}
\newcommand{\Lie}{{\mathsf{Lie}}}
\newcommand{\Ass}{{\mathsf{As}}}
\newcommand{\Cobar}{{\mathrm{ C o b a r}}}
\newcommand{\Conv}{{\mathrm{Conv}}}
\newcommand{\dgSh}{{\mathsf{dgSh}}}
\renewcommand{\c}{{\circ}}
\newcommand{\mSH}{{\mathfrak{SH}}}
\newcommand{\Def}{{\rm D e f }}
\newcommand{\End}{{\mathsf {E n d} }}
\newcommand{\Hom}{{\mathrm {Hom}}}
\newcommand{\TS}{{\mathrm{TS}}}
\newcommand{\Cyl}{{\mathrm {Cyl}}}
\newcommand{\Isom}{{\mathsf{Isom}}}
\newcommand{\coDer}{{\mathrm {coDer}}}
\newcommand{\id}{{\mathsf{ i d} }}
\newcommand{\Sh}{{\mathrm {S h} }}
\newcommand{\wt}[1]{{\widetilde{#1}}}
\newcommand{\al}{{\alpha}}
\newcommand{\la}{{\lambda}}
\newcommand{\mU}{{\mathfrak{U}}}
\newcommand{\mV}{{\mathfrak{V}}}
\newcommand{\si}{{\sigma}}
\newcommand{\cF}{{\mathcal{F}}}
\newcommand{\pa}{{\partial}}
\newcommand{\bsi}{{\bf s}^{-1}\,}
\newcommand{\bs}{{\bf s}}
\newcommand{\bt}{{\bf t}}
\newcommand{\cQ}{{\mathcal Q}}
\newcommand{\cD}{{\cal D}}
\newcommand{\cA}{{\mathcal{A}}}
\newcommand{\cG}{{\mathcal{G}}}
\newcommand{\cB}{{\mathcal{B}}}
\newcommand{\cV}{{\cal V}}
\newcommand{\bbK}{{\mathbb K}}
\newcommand{\bbZ}{{\mathbb Z}}
\newcommand{\pf}{{\,\pitchfork}}
\date{}
\newtheorem{thm}{Theorem}[section]
\newtheorem{defi}[thm]{Definition}
\newtheorem{cor}[thm]{Corollary}
\newtheorem{prop}[thm]{Proposition}
\newtheorem{cond}[thm]{Condition}
\title*{The deformation complex is a homotopy invariant of a homotopy algebra}
\author{Vasily Dolgushev and Thomas Willwacher}
\date{}
\institute{Vasily Dolgushev \at Department of Mathematics,
Temple University, 
Wachman Hall Rm. 638,
1805 N. Broad St.,
Philadelphia PA, 19122 USA, \email{vald@temple.edu}
\and Thomas Willwacher \at Department of Mathematics, 
ETH Z\"urich, 
R\"amistrasse 101, 8092 Z\"urich, Switzerland,
\email{thomas.willwacher@math.ethz.ch}}
\begin{document}


\maketitle

~\\
\begin{flushright}
{\small
{\it ``Well, mathematician X likes to write formulas...''}\\ Alexander Beilinson}
\end{flushright}
~\\

\abstract*{
To a homotopy algebra one may associate its deformation complex, 
which is naturally a differential graded Lie algebra. We show that 
$\infty$-quasi-isomorphic homotopy algebras have $L_\infty$-quasi-isomorphic
deformation complexes by an explicit construction. 
}

\abstract{
To a homotopy algebra one may associate its deformation complex, 
which is naturally a differential graded Lie algebra. We show that 
$\infty$-quasi-isomorphic homotopy algebras have $L_\infty$-quasi-isomorphic
deformation complexes by an explicit construction. 
}
AMS Subject Classification (2010): 18D50 Operads.

\section{Introduction}
Given two homotopy algebras $\cA$, $\cB$ of a certain type (e. g. $L_\infty$- or $A_\infty$-algebras), we may 
define their deformation complexes $\Def(\cA)$ and $\Def(\cB)$, which are differential graded Lie algebras. 
Suppose that $\cA$ and $\cB$ are quasi-isomorphic. For example, there may be an $L_\infty$- or $A_\infty$-quasi-isomorphism $\cA\to\cB$. 
It is natural to ask whether in this case the deformation complexes $\Def(\cA)$ and $\Def(\cB)$ are 
quasi-isomorphic as $L_\infty$-algebras, and whether a quasi-isomorphism may be written down in 
a (sufficiently) functorial way. The answer to the above question is (not surprisingly) yes, as is probably known to the experts.
However, the authors were not able to find a proof of this statement in the literature in the desired generality.  

The modest purpose of this note is fill in this gap by presenting the construction of an explicit 
sequence of quasi-isomorphisms connecting $\Def(\cA)$ with $\Def(\cB)$\,.

This note is organized as follows. After a brief description of our construction, we recall, in Section \ref{sec:prelim}, the 
necessary prerequisites about homotopy algebras. Section \ref{sec:main} is the core of this paper. In this section, we 
formulate the main statement (see Theorem \ref{thm:main}), describe various auxiliary constructions, and finally prove
Theorem \ref{thm:main} in Subsection \ref{sec:proof}. Section \ref{sec:sheaves} is devoted to the notion of 
homotopy algebra and its deformation complex in the setting of dg sheaves on a topological space.
In this section, we give a version of Theorem \ref{thm:main} (see Corollary \ref{cor:main}) and 
describe its application.

\subsection{The construction in a nutshell}
For the reader who already knows some homotopy algebra, here is what we will do in this note. First, the homotopy algebras of the type we consider are governed by some operad $\op P$. For example, for $A_\infty$-algebras $\op P=\Ass_\infty$ and for $L_\infty$-algebras $\op P=\Lie_\infty$. Providing $\op P$ algebra structures on $\cA$ and $\cB$ is equivalent to providing operad maps $\op P\to \End(\cA)$, $\op P\to \End(\cB)$ into the endomorphism operads. The deformation complexes $\Def(\cA)$, $\Def(\cB)$ are by definition the deformation complexes of the operad maps $\Def(\cA)=\Def(\op P\to \End(\cA))$, $\Def(\cB)=\Def(\op P\to \End(\cB))$. 

Similarly, one may define a two-colored operad $\homP$, whose algebras are triples $(\cA, \cB, F)$, where 
$\cA$ and $\cB$ are $\op P$ algebras and $F$ is a homotopy ($\infty$-)morphism between them.
Furthermore, given an $\infty$ quasi-isomorphism $\cA \leadsto \cB$, 
we may build a colored operad map $\homP\to \End(\cA,\cB)$ into the colored endomorphism operad. One may build a deformation complex $\Def(\homP\to \End(\cA,\cB))$, which is an $L_\infty$-algebra. Furthermore, there are natural maps
\[
 \Def(\cA) \leftarrow \Def(\homP\to \End(\cA,\cB)) \to \Def(\cB)
\]
which one may check to be quasi-isomorphisms. Hence this zigzag constitutes desired 
explicit and natural quasi-isomorphisms of $L_\infty$-algebras.

~\\
\noindent
\textbf{Acknowledgements:} We would like to thank Bruno Vallette for useful discussions.
V.D. acknowledges the NSF grant DMS-1161867 and 
the grant FASI RF 14.740.11.0347. 
T.W. thanks the Harvard Society of Fellows and the Swiss National Science Foundation (grant PDAMP2\_137151) for their support.

\section{Preliminaries}
\label{sec:prelim}

The base field $\bbK$ has characteristic zero. The underlying symmetric 
monoidal category is the category of unbounded cochain complexes of 
$\bbK$-vector spaces. We will use the notation and conventions about labeled planar 
trees from \cite{notes}. In particular, we denote by $\Tree(n)$ the groupoid of 
$n$-labeled planar trees. As in \cite{notes}, we denote by $\Tree_2(n)$ 
the full subcategory of $\Tree(n)$ whose objects are $n$-labeled planar 
trees with exactly $2$ nodal vertices. For a groupoid $\cG$, the notation 
$\pi_0(\cG)$ is reserved for the set of isomorphism classes of objects 
in $\cG$. 

We say that an $n$-labeled planar tree $\bt$ is a {\it pitchfork} if 
each leaf of $\bt$ has height\footnote{Recall that the height of a vertex $v$ is the 
length of the (unique) path which connects $v$ to the root vertex.} 3.
Figure \ref{fig:pf} shows a pitchfork while 
figure  \ref{fig:non-pf}  shows a tree that is not a pitchfork.  
\begin{figure}[htp] 
\begin{minipage}[t]{0.45\linewidth}
\centering 
\begin{tikzpicture}[scale=0.5]
\tikzstyle{w}=[circle, draw, minimum size=3, inner sep=1]
\tikzstyle{b}=[circle, draw, fill, minimum size=3, inner sep=1]
\node[b] (r) at (2, 0) {};
\node[w] (v1) at (2, 1) {};
\node[w] (v2) at (1, 2) {};
\node[b] (v3) at (0.5, 3) {};
\draw (0.5,3.5) node[anchor=center] {{\small $2$}};
\node[b] (v4) at (1.2, 3) {};
\draw (1.2,3.5) node[anchor=center] {{\small $1$}};
\node[w] (v5) at (2, 2) {};
\node[b] (v6) at (2, 3) {};
\draw (2,3.5) node[anchor=center] {{\small $4$}};
\node[w] (v7) at (3, 2) {};
\node[b] (v8) at (3, 3) {};
\draw (3,3.5) node[anchor=center] {{\small $3$}};
\draw (r) edge (v1);
\draw (v1) edge (v2);
\draw (v1) edge (v5);
\draw (v1) edge (v7);
\draw (v2) edge (v3);
\draw (v2) edge (v4);
\draw (v5) edge (v6);
\draw (v7) edge (v8);
\end{tikzpicture}
\caption{An example of a pitchfork} \label{fig:pf}
\end{minipage}
\begin{minipage}[t]{0.45\linewidth}
\centering 
\begin{tikzpicture}[scale=0.5]
\tikzstyle{w}=[circle, draw, minimum size=3, inner sep=1]
\tikzstyle{b}=[circle, draw, fill, minimum size=3, inner sep=1]
\node[b] (r) at (2, 0) {};
\node[w] (v1) at (2, 1) {};
\node[w] (v2) at (1.5, 2) {};
\node[b] (v3) at (1, 3) {};
\draw (1,3.5) node[anchor=center] {{\small $2$}};
\node[b] (v4) at (2, 3) {};
\draw (2,3.5) node[anchor=center] {{\small $1$}};
\node[b] (v5) at (3, 2) {};
\draw (3,2.5) node[anchor=center] {{\small $3$}};
\draw (r) edge (v1);
\draw (v1) edge (v2);
\draw (v2) edge (v3);
\draw (v2) edge (v4);
\draw (v1) edge (v5);
\end{tikzpicture}
\caption{This is not a pitchfork} \label{fig:non-pf}
\end{minipage}
\end{figure}

The notation $\Tree_{\pf}(n)$ is reserved for the full sub-groupoid 
of $\Tree(n)$ whose objects are pitchforks.

Let $C$ be a coaugmented dg cooperad satisfying the following technical condition:
\begin{cond}
\label{cond:C}
The cokernel $C_{\c}$ of the coaugmentation 
carries an ascending exhaustive filtration
\begin{equation}
\label{C-circ-filtr}
\bfzero  = \cF^0 C_{\c} \subset  \cF^1 C_{\c} \subset 
  \cF^2 C_{\c} \subset \dots 
\end{equation}
which is compatible with the pseudo-cooperad structure on $C_{\c}$.
\end{cond}

For example, if the dg cooperad $C$ has the properties
\begin{equation}
\label{C-conditions}
C(1) \cong \bbK, \qquad \qquad C(0) = \bfzero
\end{equation} 
then the filtration ``by arity minus one'' on $C_{\c}$ satisfies the above technical condition.

For a cochain complex $\cV$ we denote by 
\begin{equation}
\label{C-cV}
C(\cV) := \bigoplus_{n \ge 1} \Big( C(n) \otimes \cV^{\otimes\, n}  \Big)_{S_n}
\end{equation}
the ``cofree'' $C$-coalgebra co-generated by $\cV$\,.

We denote by 
\begin{equation}
\label{coDer-C-cV}
\coDer \big(C(\cV)\big)
\end{equation}
the cochain complex of coderivations of the cofree coalgebra 
$C(\cV)$ co-generated by $\cV$\,. In other words, $\coDer \big(C(\cV)\big)$
consists of $\bbK$-linear maps 
\begin{equation}
\label{cD-coder}
\cD : C(\cV) \to C(\cV)
\end{equation}
which are compatible with the $C$-coalgebra structure on $C(\cV)$ in the 
following sense: 
\begin{equation}
\label{coder-axiom}
\Delta_n \circ \cD  = \sum_{i=1}^n \big( \id_C \otimes \id_{\cV}^{\otimes (i-1)} \otimes \cD \otimes 
\id_{\cV}^{n-i} \big) \circ \Delta_n
\end{equation}
where $\Delta_n$ is the comultiplication map 
$$
\Delta_n :  C(\cV) \to \Big( C(n) \otimes  \big( C(\cV) \big)^{\otimes n} \Big)_{S_n}\,.
$$ 
The $\bbZ$-graded vector space \eqref{coDer-C-cV} carries a natural differential
$\pa$ induced by those on $C$ and $\cV$\,.

Since the commutator of two coderivations is again a coderivation, the 
cochain complex \eqref{coDer-C-cV} is naturally a dg Lie algebra.

Recall that, since the $C$-coalgebra $C(\cV)$ is cofree, every
coderivation $\cD : C(\cV) \to C(\cV) $ is uniquely determined by 
its composition $p_{\cV} \circ \cD$ with the canonical projection:
\begin{equation}
\label{p-cV}
p_{\cV} : C(\cV) \to \cV\,.
\end{equation}

We denote by 
\begin{equation}
\label{coDer-pr-C-cV}
\coDer' \big(C(\cV)\big)
\end{equation}
the dg Lie subalgebra of coderivations $\cD \in \coDer \big(C(\cV)\big)$
satisfying the additional technical condition
\begin{equation}
\label{cD-cond}
\cD \Big|_{\cV} = 0\,.
\end{equation}

Due to \cite[Proposition 4.2]{notes}, the map 
$$
\cD \mapsto p_{\cV} \circ \cD
$$
induces an isomorphism of dg Lie algebras
\begin{equation}
\label{coDer-pr-Conv}
\coDer' \big(C(\cV) \big) \cong \Conv(C_{\circ}, \End_{\cV})\,,
\end{equation}
where the differential $\pa$ on $\Conv(C_{\circ}, \End_{\cV})$ comes solely 
from the differential on $C_{\circ}$ and $\cV$\,. 
Here $\Conv(\cdots)$ denotes the convolution Lie algebra of ($\mathbb{S}$-module-)maps from a cooperad to an operad, cf. \cite[section 6.4.4]{LV-book}.

Recall that \cite[Proposition 5.2]{notes} 
$\Cobar(C)$-algebra structures on a cochain complex $\cV$
are in bijection with Maurer-Cartan (MC) elements in $\coDer' \big(C(\cV) \big)$, i.~e., with degree $1$ coderivations 
\begin{equation}
\label{Q}
Q \in  \coDer' \big(C(\cV) \big)
\end{equation}
satisfying the Maurer-Cartan equation 
\begin{equation}
\label{MC-Q}
\pa Q + \frac{1}{2}[Q, Q] = 0\,.
\end{equation}

Hence, given a $\Cobar(C)$-algebra structure on $\cV$, we may consider 
the dg Lie algebra \eqref{coDer-pr-Conv} and the $C$-coalgebra $C(\cV)$
with the new differentials 
\begin{equation}
\label{diff-tw-Q}
\pa + [Q, ~]\,,
\end{equation}
and
\begin{equation}
\label{diff-tw-Q1}
\pa + Q\,,
\end{equation}
respectively.

In this text we use the following ``pedestrian'' definition of homotopy 
algebras: 
\begin{defi}
\label{dfn:homot-alg}
Let $C$ be a coaugmented dg cooperad satisfying Condition \ref{cond:C}. 
\emph{A homotopy algebra of type} $C$ is a $\Cobar(C)$-algebra $\cV$. 
\end{defi}

Using the above link between $\Cobar(C)$-algebra structures on 
$\cV$ and Maurer-Cartan elements $Q$ of  $\coDer' \big(C(\cV) \big)$, we see that 
every homotopy algebra $\cV$ of type $C$ 
gives us a dg $C$-coalgebra $C(\cV)$ with the differential $\pa + Q$.
This observation motivates our definition of an $\infty$-morphism 
between homotopy algebras:
\begin{defi}
\label{dfn:infty-morph}
Let $\cA$, $\cB$ be homotopy algebras of type $C$ and let 
$Q_{\cA}$ (resp. $Q_{\cB}$) be the MC element of 
$\coDer' \big(C(\cA) \big)$ (resp. $ \coDer' \big(C(\cB) \big)$)
corresponding to the $\Cobar(C)$-algebra structure on $\cA$
(resp. $\cB$). Then an 
\emph{$\infty$-morphism} from $\cA$ to $\cB$ is a homomorphism 
$$
F : C(\cA) \to C(\cB)
$$
of the dg $C$-coalgebras $C(\cA)$ and $C(\cB)$ with the differentials 
$\pa + Q_{\cA}$ and $\pa + Q_{\cB}$, respectively. 

A homomorphism of dg $C$-coalgebras
$F$ is called an \emph{$\infty$ quasi-isomorphism} if the composition
\[
 \cA \hookrightarrow C(\cA) \stackrel{F}{\to} C(\cB) \stackrel{p_\cB}{\to} \cB
\]
is a quasi-isomorphism of cochain complexes.
\end{defi}

We say that two homotopy algebras $\cA$ and $\cB$ are 
\emph{quasi-isomorphic} if there exists a sequence of $\infty$ quasi-isomorphisms 
connecting $\cA$ with $\cB$.

\begin{defi}
\label{dfn:Def-comp}
Let $\cA$ be a homotopy algebra of type $C$ and 
$Q$ be the corresponding MC element of 
$\coDer' \big(C(\cA) \big)$. Then the cochain complex 
\begin{equation}
\label{Def-comp}
\Def(\cA) := \coDer' \big(C(\cA) \big) 
\end{equation}
with the differential $\pa + [Q, ~]$ is called the 
\emph{deformation complex} of the homotopy algebra $\cA$\,.
\end{defi}

\section{The main statement}
\label{sec:main}

We observe that the deformation complex  \eqref{Def-comp} of a homotopy 
algebra $\cA$ is naturally a dg Lie algebra.  We claim that 
\begin{thm}
\label{thm:main}
Let $C$ be a coaugmented dg cooperad satisfying Condition \ref{cond:C}.
If $\cA$ and $\cB$ are quasi-isomorphic homotopy algebras 
of type $C$ then the deformation complex 
$\Def(\cA)$ of $\cA$ is $L_{\infty}$-quasi-isomorphic to the deformation complex $\Def(\cB)$ of $\cB$.
\end{thm}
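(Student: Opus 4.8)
The plan is to reduce everything to a single zigzag of $L_\infty$-quasi-isomorphisms associated with one $\infty$ quasi-isomorphism $F : \cA \leadsto \cB$; since by definition quasi-isomorphic homotopy algebras are connected by a sequence of such $F$'s, the general case follows by composing the zigzags. So fix an $\infty$ quasi-isomorphism $F$ and write $Q_\cA$, $Q_\cB$ for the corresponding MC elements. The key geometric object is a \emph{two-colored} analogue of $\coDer'$: consider pairs of coderivations on $C(\cA)$, $C(\cB)$ together with a coderivation ``along $F$'', i.e. a map $C(\cA) \to C(\cB)$ compatible with the coalgebra structures in the appropriate twisted sense. Concretely, I would form the convolution Lie algebra $\mathfrak{h} := \Conv(C_\circ, \End_{\cA,\cB})$ where $\End_{\cA,\cB}$ is the two-colored endomorphism operad on the pair $(\cA,\cB)$; an $\infty$-morphism $F$ together with the two homotopy algebra structures assembles into a single MC element $\mathfrak{Q} = (Q_\cA, Q_\cB, F)$ of $\mathfrak{h}$, and the twisted dg Lie algebra $\mathfrak{h}^{\mathfrak{Q}}$ with differential $\pa + [\mathfrak{Q},\,\cdot\,]$ plays the role of $\Def(\homP \to \End(\cA,\cB))$ from the introduction.

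The heart of the argument is then to produce the two projections
\begin{equation}
\label{eq:zigzag-plan}
\Def(\cA) \xleftarrow{\ \pi_\cA\ } \mathfrak{h}^{\mathfrak{Q}} \xrightarrow{\ \pi_\cB\ } \Def(\cB)
\end{equation}
and show each is an $L_\infty$-quasi-isomorphism. The maps $\pi_\cA$, $\pi_\cB$ are the obvious ``forget everything but the $\cA$-part'' (resp. $\cB$-part) maps; one checks directly from the coderivation axiom \eqref{coder-axiom} in the two-colored setting that they are strict morphisms of dg Lie algebras (or at worst morphisms of $L_\infty$-algebras, if a strict splitting is unavailable — but I expect strictness here). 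It remains to prove they are quasi-isomorphisms of the underlying complexes. For this I would filter all three complexes by the arity filtration coming from Condition \ref{cond:C} (the exhaustive filtration $\cF^\bullet C_\circ$), so that on the associated graded the differential reduces to the ``linear part'' built from $\pa_C$, $\pa_\cV$ and the linear components of $Q_\cA$, $Q_\cB$, $F$. On $\mathrm{gr}$, the complex $\mathrm{gr}\,\mathfrak{h}^{\mathfrak{Q}}$ becomes, componentwise in arity $n$, a Hom-complex of the form $\Hom(C(n), \cdots)$ where the target is (a summand of) a mapping-cone-type complex built from $\cA^{\otimes n}$, $\cB^{\otimes n}$ and the ``mixed'' terms $\cA^{\otimes i}\otimes\cB^{\otimes(n-i)}$ glued by the linear part $f_1$ of $F$; since $f_1 : \cA \to \cB$ is a quasi-isomorphism by the definition of $\infty$ quasi-isomorphism, all these mixed pieces are acyclic, and projection onto the $\cA$-only (resp. $\cB$-only) summand is a quasi-isomorphism on $\mathrm{gr}$. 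A standard spectral sequence / complete filtration comparison argument (the filtrations are exhaustive and, after passing to the relevant completions or using that $C(n)$ is concentrated appropriately, bounded below in each arity) then upgrades this to a quasi-isomorphism of the filtered complexes, giving \eqref{eq:zigzag-plan}.

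The main obstacle I anticipate is the bookkeeping for the two-colored convolution Lie algebra and, especially, the acyclicity of the ``mixed'' part of $\mathrm{gr}\,\mathfrak{h}^{\mathfrak{Q}}$: one has to set up the right tensor-Hom decomposition so that the differential on $\mathrm{gr}$ visibly splits as $\pa_C \otimes \id \pm \id \otimes (\text{linear part})$, and then identify the linear part with a tensor power of the mapping cone of $f_1 : \cA \to \cB$, which is acyclic precisely because $f_1$ is a quasi-isomorphism and we are in characteristic zero (so taking $S_n$-coinvariants is exact). A secondary technical point is convergence of the filtration: if $C$ does not satisfy the stronger conditions \eqref{C-conditions} one must argue more carefully that the arity filtration from Condition \ref{cond:C}, which need not be bounded in a fixed arity, still gives a convergent spectral sequence — this can be handled by working arity-by-arity, where within each fixed arity $n$ the filtration $\cF^\bullet C_\circ(n)$ is exhaustive and the relevant complexes are bounded, so no completion subtleties arise. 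Everything else — that $\pi_\cA$, $\pi_\cB$ are Lie algebra maps, that $\mathfrak{Q}$ is MC, that composing zigzags for a chain of $\infty$ quasi-isomorphisms yields the claimed $L_\infty$-quasi-isomorphism between $\Def(\cA)$ and $\Def(\cB)$ — is formal.
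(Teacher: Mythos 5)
Your overall architecture coincides with the paper's: reduce to a single $\infty$ quasi-isomorphism, form the deformation complex of the triple $(Q_\cA,Q_\cB,F)$, project to $\Def(\cA)$ and $\Def(\cB)$, and win because the linear component $F_1$ is a quasi-isomorphism (in characteristic zero, $\Hom$, $\otimes$ and coinvariants preserve quasi-isomorphisms). However, the central object is mis-stated in a way that hides the real work. The structure governing such triples is \emph{not} a convolution dg Lie algebra $\Conv(C_{\circ},\End_{\cA,\cB})$ with only a binary bracket: the mixed component $\Hom(C(\cA),\cB)$ must interact with $\Hom(C_{\circ}(\cB),\cB)$ through operations of every arity (one $R$ and $r$ inputs $T_1,\dots,T_r$, as in \eqref{brack-cB-mixed}), because it is exactly these higher brackets that make the mixed MC equation \eqref{MC-mixed} say ``$F$ is an $\infty$-morphism''. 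With a binary bracket alone, the MC elements of your $\Conv(C_{\circ},\End_{\cA,\cB})$ would not be the triples you need. So one must actually construct the $L_\infty$-structure and verify its Jacobi identities (this is the pitchfork combinatorics behind Claim \ref{cl:L-infty-indeed}); your hedge that the projections might be ``at worst $L_\infty$-morphisms'' does not address this upstream issue.

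Once the intermediate object is an honest $L_\infty$-algebra, twisting by the full MC element $(Q_\cA,Q_\cB,F)$ involves infinite sums of higher brackets, and the complex you twist is \emph{not} pro-nilpotent for the filtration induced by \eqref{C-circ-filtr}: the linear part $F_1\in\Hom(\cA,\cB)$ sits in filtration degree $0$. Your proposal never addresses the well-definedness of this twist. The paper's device is a two-step twist: first by $\bs F_1$ alone (justified by explicit vanishing of brackets containing many copies of $F_1$), then restriction to $\Cyl_{\c}(C,\cA,\cB)^{\bs F_1}$, which equals $\cF_1$ of the filtration \eqref{Cyl-c-advantage} and is therefore pro-nilpotent, and only then twisting by the remaining MC element $\al^{\Cyl}$. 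Relatedly, your convergence remark is too quick: Condition \ref{cond:C} does not make $\cF^{\bullet}C_{\circ}(n)$ bounded within a fixed arity, and the induced filtration on the $\Hom$-complexes is a complete descending one, so spectral-sequence convergence is not automatic. The paper sidesteps this by proving that $\pi_{\cA}$ and $\pi_{\cB}$ are quasi-isomorphisms \emph{before} twisting, via the elementary cylinder Lemma \ref{lem:pi-q-iso} (your ``mixed pieces glued by $f_1$'' is precisely the statement that the maps $f$ and $\wt f$ of \eqref{f}--\eqref{wt-f} are quasi-isomorphisms), and then invoking that twisting by an MC element of a complete filtered $L_\infty$-algebra preserves quasi-isomorphisms. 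Your argument becomes correct once you either quote such a twisting theorem or carry out the complete-filtration comparison honestly; as written, these steps are gaps.
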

\begin{remark}
\label{rem:Keller}
For $A_{\infty}$-algebras this statement follows from the result
\cite{Keller} of B. Keller. 
\end{remark}

It is clearly sufficient to prove this theorem in the case when $\cA$ and $\cB$ are 
connected by a single $\infty$ quasi-isomorphism 
$F:\cA \leadsto \cB$. 

We will prove the Theorem by constructing an $L_\infty$-algebra $\Def(\cA \stackrel{F}{\leadsto} \cB)$, together with quasi-isomorphisms
\[
 \Def(\cA) ~\leftarrow~ \Def(\cA \stackrel{F}{\leadsto} \cB) ~\to~ \Def(\cB).
\]

The next subsections are concerned with the definition of $\Def(\cA \stackrel{F}{\leadsto} \cB)$.
The proof of Theorem \ref{thm:main} is given in Section \ref{sec:proof} below.

\subsection{The auxiliary $L_{\infty}$-algebra $\Cyl(C, \cA, \cB)$}

Let $\cA$, $\cB$ be cochain complexes. We consider the 
graded vector space 
\begin{equation}
\label{sprout}
\Cyl(C, \cA, \cB) :  = 
\Hom(C_{\c}(\cA), \cA) ~\oplus~
\bs \Hom(C(\cA), \cB) ~\oplus~
\Hom(C_{\c}(\cB), \cB)
\end{equation}
with the differential coming from those on $C$, $\cA$ and $\cB$\,. Here we denote by $\bs V$ the suspension of the graded vector space $V$. Concretely, if $v\in V$ has degree $d$, then the corresponding element $\bs v \in \bs V$ has degree $d+1$.

We equip the cochain complex $\Cyl(C, \cA, \cB)$ 
with an $L_{\infty}$-structure by declaring that 
\begin{equation}
\label{brack-cA-cA}
\{\bsi P_1, \bsi P_2, \dots, \bsi P_n\}_n : = 
\begin{cases}
 (-1)^{|P_1|+1} \bsi [P_1, P_2] & {\rm if} ~~ n =2\,, \\
 0 & {\rm otherwise}\,.
\end{cases}
\end{equation}
\begin{equation}
\label{brack-cB-cB}
\{\bsi R_1, \bsi R_2, \dots, \bsi R_n\}_n : = 
\begin{cases}
 (-1)^{|R_1|+1} \bsi [R_1, R_2] & {\rm if} ~~ n =2\,, \\
 0 & {\rm otherwise}\,.
\end{cases}
\end{equation}
for $P_i \in  \Hom(C_{\c}(\cA), \cA) \cong \Conv(C_{\c}, \End_{\cA})$\,,
and $R_i \in  \Hom(C_{\c}(\cB), \cB) \cong \Conv(C_{\c}, \End_{\cB})$\,,
and $[~,~]$ is the Lie bracket on the convolution algebras $\Conv(C_{\c}, \End_{\cA})$
and $\Conv(C_{\c}, \End_{\cB})$, respectively.

Furthermore,
\begin{multline}
\label{brack-cA-mixed}
\{ T, \bsi P\}_2 (X, a_1, \dots, a_n) = 
\\
\sum_{\substack{ 0 \le p \le n\\[0.1cm] \si \in \Sh_{p, n-p} }} \sum_{i}
(-1)^{|T| + |P| (|X'_{\si, i}| + 1)}
 T \big( X'_{\si, i}, P(X''_{\si, i}; a_{\si(1)}, \dots, a_{\si(p)}), a_{\si(p+1)}, \dots, a_{\si(n)}\big)\,,
\end{multline}
where $T \in  \Hom(C (\cA), \cB) $, $P \in   \Hom(C_{\c}(\cA), \cA) $, 
$X \in C_{\c}(n)$, $X'_{\si, i}$, $X''_{\si, i}$ are tensor factors in 
$$
\Delta_{\bt_{\si}} (X) = \sum_{i}  X'_{\si, i} \otimes  X''_{\si, i}\,,
$$
$P$ is extended by zero to $\cA \subset C(\cA)$, 
and $\bt_{\si}$ is the $n$-labeled planar tree depicted on figure  
\ref{fig:shuffle}.
\begin{figure}[htp]
\centering
\begin{tikzpicture}[scale=0.5]
\tikzstyle{w}=[circle, draw, minimum size=3, inner sep=1]
\tikzstyle{b}=[circle, draw, fill, minimum size=3, inner sep=1]
\node[b] (l1) at (-1, 4) {};
\draw (-1,4.6) node[anchor=center] {{\small $\si(1)$}};
\draw (0,3.8) node[anchor=center] {{\small $\dots$}};
\node[b] (lp) at (1, 4) {};
\draw (1,4.6) node[anchor=center] {{\small $\si(p)$}};
\node[w] (vv) at (0, 2) {};
\node[b] (lp1) at (3, 2.5) {};
\draw (3,3.1) node[anchor=center] {{\small $\si(p+1)$}};
\draw (4.25,2.4) node[anchor=center] {{\small $\dots$}};
\node[b] (ln) at (5.5, 2.5) {};
\draw (5.5,3.1) node[anchor=center] {{\small $\si(n)$}};
\node[w] (v) at (3, 1) {};
\node[b] (r) at (3, 0) {};
\draw (vv) edge (l1);
\draw (vv) edge (lp);
\draw (v) edge (vv);
\draw (v) edge (lp1);
\draw (v) edge (ln);
\draw (r) edge (v);
\end{tikzpicture}
\caption{\label{fig:shuffle} Here $\si$ is a $(p, n-p)$-shuffle}
\end{figure} 

To define yet another collection of non-zero $L_{\infty}$-brackets, we 
denote by $\Isom_{\pf}(m,r)$ the set of isomorphism classes of pitchforks
$\bt \in \Tree_{\pf}(m)$ with $r$ nodal vertices of height $2$. For every 
$z  \in \Isom_{\pf}(m,r)$ we choose a representative $\bt_{z}$ and denote by 
$X^k_{z, i}$ the tensor factors in 
\begin{equation}
\label{D-bt-z-X}
\Delta_{\bt_z}(X) = \sum_{i} X^0_{z, i} \otimes X^1_{z, i} \otimes \dots \otimes X^{r}_{z, i}\,,
\end{equation}
where $X \in C(m)$\,.

Finally, for vectors $T_j \in  \Hom(C(\cA), \cB)$ and $R \in  \Hom(C_{\c}(\cB), \cB)$ we set
\begin{multline}
\label{brack-cB-mixed}
\{\bsi R, T_1, \dots,  T_r \}_{r+1} (X, a_1, \dots, a_m) = 
\\
\sum_{\si \in S_{r}}
\sum_{z \in \Isom_{\pf}(m,r) }
\sum_i \pm (-1)^{|R|+1}
 R \big( X^0_{z, i}, T_{\si(1)}(X^1_{z, i}; a_{\la_z(1)}, \dots, a_{\la_z(n^z_1)}), 
\\
 T_{\si(2)}(X^2_{z, i}; a_{\la_z(n^z_1+1)}, \dots, a_{\la_z(n^z_1 + n^z_2)}), 
\dots ,  T_{\si(r)}(X^r_{z, i}; a_{\la_z(m-n^z_r+1)}, \dots, a_{\la_z(m)}) \big) \,,
\end{multline}
where $n^z_q$ is the number of leaves adjacent to the $(q+1)$-th nodal 
vertex of $\bt_z$, $\la_z(l)$ is the label of the $l$-th leaf of $\bt_z$, the 
map $R$ is extended by zero to $\cB \subset C(\cB)$ and the sign 
factor $\pm$ comes from the rearrangement of the homogeneous vectors  
\begin{equation}
\label{orig-order}
R, T_1, \dots,  T_r, X^0_{z, i}, X^1_{z, i}, \dots, X^r_{z, i}, a_1, \dots, a_m
\end{equation}
from their original positions in \eqref{orig-order} to their positions in  
the right hand side of \eqref{brack-cB-mixed}.

We observe that, due to axioms of a cooperad, the right hand side of 
\eqref{brack-cB-mixed} does not depend on the choice of representatives
$\bt_z \in  \Tree_{\pf}(m)$\,. 

The remaining $L_{\infty}$-brackets are either extended in the 
obvious way by symmetry or declared to be zero. 

We claim that 
\begin{claim}
\label{cl:L-infty-indeed}
The operations 
\begin{equation}
\label{brack-Cyl}
\{ ~,~,\dots,~ \}_n :  S^n (\bsi \Cyl(C, \cA, \cB))  \to  \bsi \Cyl(C, \cA, \cB), \qquad n \ge 2
\end{equation}
defined above have degree $1$ and satisfy the desired $L_{\infty}$-identities:
\begin{multline}
\label{Linf-iden}
\pa \{ f_1, f_2, \dots, f_n \}_n + 
\sum_{i=1}^n (-1)^{|f_1| + \dots  + |f_{i-1}| } \{ f_1, \dots,  f_{i-1}, \pa (f_i), f_{i+1}, \dots,  f_n \}_n  \\
\sum_{p=2}^{n-1} 
\sum_{\si \in \Sh_{p, n-p}} \pm 
\{ \{ f_{\si(1)}, f_{\si(2)}, \dots, f_{\si(p)} \}_p , f_{\si(p+1)}, \dots, f_{\si(n)} \}_{n-p+1}\,,
\end{multline}
where $f_j \in \bsi \Cyl(C, \cA, \cB)$ and the usual Koszul rule of signs is applied.
\end{claim}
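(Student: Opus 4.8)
The plan is to verify the $L_\infty$-identities \eqref{Linf-iden} by a direct computation organized according to how many of the arguments $f_1,\dots,f_n$ lie in each of the three summands of $\bsi\Cyl(C,\cA,\cB)$; by the graded symmetry of the brackets it suffices to check one representative per unordered type. The only non-zero brackets are $\{\,\cdot\,,\,\cdot\,\}_2$ on the first summand $\Hom(C_{\circ}(\cA),\cA)\cong\Conv(C_{\circ},\End_\cA)$, $\{\,\cdot\,,\,\cdot\,\}_2$ on the third summand $\Hom(C_{\circ}(\cB),\cB)\cong\Conv(C_{\circ},\End_\cB)$, the maps $\{T,\bsi P\}_2$, and the maps $\{\bsi R,T_1,\dots,T_r\}_{r+1}$; a quick inspection of the possible output types of a nested bracket shows that for all but a short list of argument types both sides of \eqref{Linf-iden} vanish identically. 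Non-trivial content arises exactly for the types: (1) all $f_j$ in the first summand; (2) all $f_j$ in the third summand; (3) one $f_j$ in the middle summand and the others in the first; (4) one $f_j$ in the third summand and $r\ge 1$ in the middle; (5) one $f_j$ in the first summand, one in the third, and $r\ge 1$ in the middle; (6) two $f_j$ in the third summand and $r\ge 1$ in the middle. First I would dispose of (1) and (2): by \eqref{brack-cA-cA} the candidate $L_\infty$-structure restricted to the first summand is exactly the dg Lie structure on $\Conv(C_{\circ},\End_\cA)\cong\coDer'(C(\cA))$, so \eqref{Linf-iden} reduces there to the Leibniz rule and the Jacobi identity, which hold by \cite[Prop.~4.2]{notes}; type (2) is identical, via \eqref{brack-cB-cB}. (The type in which all arguments lie in the middle summand is vacuous, every bracket being zero there.)

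For type (3) the key point is that, under the isomorphism $\cD\mapsto p_\cA\circ\cD$ of \eqref{coDer-pr-Conv}, the operator $\{\,\cdot\,,\bsi P\}_2$ on $\Hom(C(\cA),\cB)$ is, up to the sign in \eqref{brack-cA-mixed}, precomposition $T\mapsto T\circ\widehat P$ with the coderivation $\widehat P\in\coDer'(C(\cA))$ attached to $P$: the two-level tree $\bt_\si$ of Figure \ref{fig:shuffle} computes exactly one application of $\widehat P$. Since $P\mapsto\widehat P$ is an isomorphism of dg Lie algebras, type (3) with two arguments becomes the compatibility of $\{\,\cdot\,,\bsi P\}_2$ with $\pa$, with three arguments it becomes the associativity of composition together with the identity $\widehat{[P_1,P_2]}=[\widehat P_1,\widehat P_2]$, and with four or more arguments both sides of \eqref{Linf-iden} degenerate to $0=0$.

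Types (4), (5) and (6) are the substantive ones. Here the organizing observation is that $\{\bsi R,T_1,\dots,T_r\}_{r+1}$ is, up to sign, the part multilinear in $T_1,\dots,T_r$ of the composite $\bar R\circ F$, where $\bar R\colon C(\cB)\to\cB$ denotes $R$ extended by zero and $F\colon C(\cA)\to C(\cB)$ is the morphism of cofree $C$-coalgebras with $p_\cB\circ F=T_1+\dots+T_r$: the pitchforks $\bt_z\in\Tree_{\pf}(m)$ appearing in \eqref{brack-cB-mixed} are precisely the trees that govern the cofree reconstruction of $F$ from $p_\cB\circ F$ (the root region accounting for $R$, the $r$ height-$2$ vertices for the $T_i$, the height-$3$ leaves for the inputs), and the observation recorded just before the claim---that \eqref{brack-cB-mixed} is independent of the chosen representatives $\bt_z$---is coassociativity of $C$. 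Granting this dictionary, the three families become, respectively: for type (4), the statement that $\{\bsi R,T_1,\dots,T_r\}_{r+1}$ is a chain map for $\pa$, which is formal since $\pa$ is induced by the internal differentials of $C$, $\cA$, $\cB$; for type (5), the Leibniz rule for the action $\{\,\cdot\,,\bsi P\}_2$ on the maps $\{\bsi R,T_1,\dots,T_r\}_{r+1}$, which is exactly the coderivation axiom \eqref{coder-axiom} for $\widehat P$; and for type (6), the compatibility of $\{\bsi R,T_1,\dots,T_r\}_{r+1}$ with the Lie bracket on $\Conv(C_{\circ},\End_\cB)$, obtained by writing that bracket through operadic insertions and invoking coassociativity of $C$. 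In each case one expands the right-hand side of \eqref{Linf-iden}, notes that the inner bracket is forced to be one of the few admissible non-zero types, and checks that the resulting sum over pitchforks telescopes, via \eqref{coder-axiom} and coassociativity, to the left-hand side.

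The step I expect to be the main obstacle is not any of the structural statements above but the bookkeeping of signs: the $\pm$ in \eqref{brack-cB-mixed}, which is the Koszul sign of the rearrangement of \eqref{orig-order}, the suspension signs $(-1)^{|P_1|+1}$ and $(-1)^{|R|+1}$ in \eqref{brack-cA-cA}--\eqref{brack-cB-mixed}, and the Koszul signs built into \eqref{Linf-iden} must be shown to cancel in pairs in every one of the cases above. To make this mechanical I would fix the sign conventions once and for all at the outset, by assembling from \eqref{brack-cA-cA}--\eqref{brack-cB-mixed} a single degree-$1$ coderivation $D$ of the reduced cofree cocommutative coalgebra $\bigoplus_{n\ge 1}S^n\big(\bsi\Cyl(C,\cA,\cB)\big)$; then the family \eqref{Linf-iden} is equivalent to the single equation $D^2=0$, every sign is dictated by this packaging, and the verification reduces to the case analysis above together with the graded Jacobi identity for commutators of operators attached to $C(\cA)$ and $C(\cB)$. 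Carrying this out establishes the claim.
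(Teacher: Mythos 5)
Your proposal is correct, but it proves the claim by a different mechanism than the paper. The paper singles out the hardest relation --- the one with two arguments $\bsi R_1,\bsi R_2$ in $\Hom(C_{\c}(\cB),\cB)$ and $n$ arguments in $\bs\Hom(C(\cA),\cB)$, displayed as \eqref{RRTTT} --- and proves it by an explicit combinatorial bijection between isomorphism classes of labeled planar trees: the trees $\bt^{\pf}_{\tau}\bullet_1\bt_{\si}$ of the form \eqref{pf-tau-sigma} indexing the terms of $\{\{\bsi R_1,\bsi R_2\}_2,T_1,\dots,T_n\}_{n+1}$, and the trees $\bt^{\pf}_{\tau'}\bullet_{i+1}\bt^{\pf}_{\tau''}$ of the form \eqref{pf-pf} indexing the terms of the nested brackets; the remaining, simpler identities are left to the reader, and the paper itself defers the final sign check (``it only remains to check that the sign factors match''). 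You instead run a complete case analysis over argument types and, for the substantive cases (your (4)--(6), of which (6) is exactly \eqref{RRTTT}), replace the tree bijection by the cofree-coalgebra dictionary: $\{T,\bsi P\}_2$ is precomposition with the coderivation $\widehat P$ determined via \eqref{coDer-pr-Conv}, and $\{\bsi R,T_1,\dots,T_r\}_{r+1}$ is the multilinear part of $\bar R\circ F$ for the coalgebra morphism $F$ with corestriction $T_1+\dots+T_r$; the key cancellation then comes from cofreeness (a coderivation along $F$, such as $\widehat R_2\circ F$, is determined by its corestriction $\bar R_2\circ F$), the coderivation axiom \eqref{coder-axiom}, and coassociativity --- which is precisely the content the paper encodes combinatorially, since the pitchfork coproducts \eqref{D-bt-z-X} are the corestriction expansion of $F$. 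What your route buys is a more structural derivation of all the identities at once, with a systematic (if still to-be-executed) treatment of signs via the ``single coderivation $D$, check $D^2=0$'' packaging; what the paper's route buys is a completely explicit, convention-independent description of which terms cancel against which, at the price of an unilluminating tree count. Both arguments are at the same level of completeness: like the paper, you leave the sign verification (and, in your case, the check that the ad hoc signs in \eqref{brack-cA-cA}--\eqref{brack-cB-mixed} agree with those dictated by your operator packaging) as residual bookkeeping, so your plan is an acceptable alternative proof modulo the same final step the authors also omit.
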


Before proving Claim \ref{cl:L-infty-indeed},  we would like to 
show that
\begin{claim}
\label{cl:MC-Cyl}
The MC equation for the $L_{\infty}$-algebra  $\Cyl(C, \cA, \cB)$ 
is well defined. Moreover, MC elements of the $L_{\infty}$-algebra  $\Cyl(C, \cA, \cB)$ 
are triples: 
\begin{itemize}

\item a $\Cobar(C)$-algebra structure on $\cA$, 

\item a $\Cobar(C)$-algebra structure on $\cB$, and 

\item an $\infty$-morphism from $\cA$ to $\cB$\,. 

\end{itemize}

\end{claim}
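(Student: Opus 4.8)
The plan is to unpack the $L_\infty$ Maurer--Cartan equation for $\Cyl(C,\cA,\cB)$ componentwise according to the three summands in \eqref{sprout}, and to match each component against Definitions \ref{dfn:homot-alg} and \ref{dfn:infty-morph}. Write a degree $1$ element of $\bsi \Cyl(C,\cA,\cB)$ as a triple $(\bsi Q_\cA,\, T,\, \bsi Q_\cB)$, where $Q_\cA \in \Hom(C_\c(\cA),\cA)$ has degree $1$, $Q_\cB \in \Hom(C_\c(\cB),\cB)$ has degree $1$, and $T \in \bs\Hom(C(\cA),\cB)$ is the ``mixed'' component. First I would observe that, because the only nonzero brackets involving two $\cA$-type (resp.\ two $\cB$-type) inputs are the binary ones \eqref{brack-cA-cA} (resp.\ \eqref{brack-cB-cB}), the $\cA$-component of the MC equation is exactly $\pa Q_\cA + \tfrac12[Q_\cA,Q_\cA]=0$ in $\Conv(C_\c,\End_\cA)$, and likewise the $\cB$-component is $\pa Q_\cB+\tfrac12[Q_\cB,Q_\cB]=0$; by the isomorphism \eqref{coDer-pr-Conv} together with \cite[Proposition 5.2]{notes} these are precisely $\Cobar(C)$-algebra structures on $\cA$ and on $\cB$. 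This already requires checking that the infinite sums defining the MC equation are locally finite, i.e.\ that for a fixed element of $C_\c(n)$ only finitely many brackets contribute; this is where Condition \ref{cond:C} enters, via the exhaustive filtration on $C_\c$ which bounds the cooperadic coproducts that can appear.

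Next I would analyze the mixed component of the MC equation, which is the coefficient of the $\bs\Hom(C(\cA),\cB)$-summand. Here the contributing brackets are: the differential $\pa T$; the binary bracket $\{T,\bsi Q_\cA\}_2$ of \eqref{brack-cA-mixed}, which inserts the $\cA$-structure into an $\cA$-slot of $T$; and the brackets $\{\bsi Q_\cB, T,\dots,T\}_{r+1}$ of \eqref{brack-cB-mixed}, which compose several copies of $T$ along a pitchfork and then apply $Q_\cB$ at the root. The claim is that, under the correspondence $T \leftrightarrow F$ where $F:C(\cA)\to C(\cB)$ is the unique $C$-coalgebra morphism with $p_\cB\circ F = $ (the desuspension of) $T$ extended by the identity on co-generators, the vanishing of the mixed MC component is equivalent to $F$ being a chain map for the twisted differentials $\pa + Q_\cA$ and $\pa + Q_\cB$, i.e.\ $(\pa+Q_\cB)\circ F = F\circ(\pa+Q_\cA)$. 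The key bookkeeping point is that a morphism of cofree $C$-coalgebras is determined by its corestriction $p_\cB\circ F$, and that $p_\cB \circ F \circ Q_\cA$ unfolds into exactly the shuffle sum \eqref{brack-cA-mixed} (one $C$-coproduct splitting off the node where $Q_\cA$ acts), while $p_\cB\circ Q_\cB\circ F$ unfolds into exactly the pitchfork sum \eqref{brack-cB-mixed} (the coproduct $\Delta_{\bt_z}$ along a height-$3$ pitchfork records: one factor $X^0$ fed to $Q_\cB$, and $r$ factors $X^1,\dots,X^r$ each fed to a copy of $F$, hence of $T$). Matching signs is the one genuinely laborious part, but the structural identification is forced once one writes $F$ in the standard cofree form.

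The main obstacle I anticipate is precisely this sign/normalization reconciliation together with the convergence (local finiteness) argument. On the convergence side one must argue that on any fixed $X \in C_\c(n)$ (or $C(n)$) the deiterated coproducts $\Delta_{\bt}(X)$ along trees with many nodal vertices eventually leave every filtration stage $\cF^k C_\c$, so only finitely many $r$ contribute to $\{\bsi Q_\cB, T,\dots,T\}_{r+1}$ evaluated at $X$; this is the content of Condition \ref{cond:C} and is the reason the MC equation is ``well defined'' as asserted in the first sentence of Claim \ref{cl:MC-Cyl}. On the sign side, the suspension shifts $\bsi$ in \eqref{sprout}, the explicit signs $(-1)^{|P_1|+1}$, $(-1)^{|R|+1}$ and the rearrangement sign $\pm$ in \eqref{orig-order}--\eqref{brack-cB-mixed} have been chosen so that the three pieces assemble into the ungraded-looking equation $(\pa+Q_\cB)F = F(\pa+Q_\cA)$; verifying this is a matter of tracking the Koszul rule through the desuspension isomorphisms, which I would do once and for all by comparing with the coderivation/coalgebra-morphism picture rather than by brute force. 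Once the correspondences $(\cA\text{-component})\leftrightarrow Q_\cA$, $(\cB\text{-component})\leftrightarrow Q_\cB$, $(\text{mixed component})\leftrightarrow F$ are established as bijections, Claim \ref{cl:MC-Cyl} follows, since by Definition \ref{dfn:infty-morph} an $\infty$-morphism $\cA\to\cB$ is exactly such an $F$, and conversely any such triple $(Q_\cA,F,Q_\cB)$ gives an MC element.
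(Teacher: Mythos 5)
Your overall route is the same as the paper's: split a degree $1$ element $U$ into $Q_{\cA}+\bs U_F+Q_{\cB}$, observe that the only brackets landing in $\Hom(C_{\c}(\cA),\cA)$ and $\Hom(C_{\c}(\cB),\cB)$ are the binary ones \eqref{brack-cA-cA}, \eqref{brack-cB-cB}, so those components of the MC equation are the convolution-algebra MC equations for $Q_{\cA}$ and $Q_{\cB}$, and identify the mixed component with the statement that the coalgebra morphism $F$ with corestriction $U_F$ intertwines $\pa+Q_{\cA}$ and $\pa+Q_{\cB}$. The paper merely asserts this last equivalence, while you spell out the corestriction/pitchfork-coproduct bookkeeping behind it, and that part of your argument is correct.

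The gap is in your justification of the first assertion, the well-definedness of the MC sum. You attribute the convergence to Condition \ref{cond:C}, arguing that iterated coproducts of a fixed $X$ eventually leave every filtration stage of $C_{\c}$. That is not the mechanism, and as stated the argument does not go through: the mixed component $U_F$ lies in $\Hom(C(\cA),\cB)$, not $\Hom(C_{\c}(\cA),\cB)$, so it need not vanish on coaugmentation factors, whereas the compatibility required in Condition \ref{cond:C} only concerns the reduced (pseudo-cooperadic) cocompositions of $C_{\c}$; the factors $X^1_{z,i},\dots,X^r_{z,i}$ in \eqref{D-bt-z-X} that get fed to the copies of $U_F$ in \eqref{brack-cB-mixed} can perfectly well be coaugmentation elements of $C(1)$, which the filtration does not see. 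The correct (and much simpler) reason, which is the one the paper gives, is arity counting: a pitchfork contributing to \eqref{brack-cB-mixed} on an argument $(X,a_1,\dots,a_k)\in C(\cA)$ has at most $k$ nodal vertices of height $2$, since each such vertex carries at least one leaf (cf.\ the indexing by $\mSH_{k,r}$), so $\{\bsi U,\dots,\bsi U\}_n(X,a_1,\dots,a_k)=0$ for $n\ge k+1$, and the pure $\cA$- and $\cB$-components vanish for $n\ge 3$. Condition \ref{cond:C} is in fact not needed for Claim \ref{cl:MC-Cyl} at all; in the paper it enters only later, to make $\Cyl_{\c}(C,\cA,\cB)^{\bs F_1}$ pro-nilpotent (Remark \ref{rem:filtration-Cyl-c}) and thereby justify twisting by $\al^{\Cyl}$ --- and indeed pro-nilpotence fails for the full $\Cyl(C,\cA,\cB)$ precisely because of the $\Hom(\cA,\cB)$-type part, which is another indication that a filtration argument cannot be what makes the MC sum converge for an arbitrary degree $1$ element.
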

\begin{proof}
\smartqed
Let $U$ be a degree $1$ element in $\Cyl(C, \cA, \cB)$.

We observe that the components of 
$$
\{\bsi U, \bsi U, \dots, \bsi U \}_n 
$$
in $\Hom(C_{\c}(\cA), \cA)$ and $\Hom(C_{\c}(\cB), \cB)$ are 
zero for all $n \ge 3$\,.
Furthermore, for every $(X, a_1, \dots, a_k) \in C(\cA)$ 
$$
\{\bsi U, \bsi U, \dots, \bsi U \}_n (X, a_1, \dots, a_k) =0 \qquad \forall~~ n \ge k+1\,. 
$$

Therefore the infinite sum 
\begin{equation}
\label{MC-U}
[\pa, U] + \sum_{n=2}^{\infty} \frac{1}{n!} \{\bsi U, \bsi U, \dots, \bsi U \}_n
\end{equation}
makes sense for every degree $1$ element $U$ in $\Cyl(C, \cA, \cB)$
and we can talk about MC elements of  $\Cyl(C, \cA, \cB)$.

To prove the second statement, we split the degree $1$ element $U \in \Cyl(C, \cA, \cB)$ into 
a sum 
$$
U = Q_{\cA} + \bs\, U_F + Q_{\cB}\,,
$$
where $Q_{\cA} \in \Conv(C_{\c}, \End_{\cA})$,  $Q_{\cB} \in \Conv(C_{\c}, \End_{\cB})$, 
and $U_F  \in \Hom(C(\cA), \cB)$\,.

Then the MC equation for $U$ is equivalent to the following three equations: 
\begin{equation}
\label{MC-Q-cA}
\pa Q_{\cA} + \frac{1}{2} [Q_{\cA}, Q_{\cA}] = 0 ~~\textrm{ in }~~ \Conv(C_{\c}, \End_{\cA})\,,
\end{equation}
\begin{equation}
\label{MC-Q-cB}
\pa Q_{\cB} + \frac{1}{2} [Q_{\cB}, Q_{\cB}] = 0 ~~\textrm{ in }~~    \Conv(C_{\c}, \End_{\cB})\,,
\end{equation}
and
\begin{equation}
\label{MC-mixed}
[\pa, U_F] + \{ U_F, \bsi Q_{\cA}\}_2  + 
\sum_{r=1}^{\infty} \frac{1}{r!} \{\bsi  Q_{\cB}, U_F, U_F, \dots, U_F\}_{r+1} = 0\,.
\end{equation}
 
Equations \eqref{MC-Q-cA} and \eqref{MC-Q-cB} imply that $Q_{\cA}$ (resp. $Q_{\cB}$)
gives us a $\Cobar(C)$-algebra structure on $\cA$ (resp. $\cB$)\,. Furthermore, equation 
\eqref{MC-mixed} means that $U_F$ is an $\infty$-morphism from $\cA$ to $\cB$\,.
\qed
\end{proof}

\subsubsection{Proof of Claim \ref{cl:L-infty-indeed}}

The most involved identity on $L_{\infty}$-brackets defined above is 
\begin{multline} 
\label{RRTTT}
\{\{\bsi R_1, \bsi R_2\}_2, T_1, \dots, T_n \}_{n+1} + 
\\
\sum_{\substack{ 1\le p \le n-1 \\[0.1cm] \si \in \Sh_{p, n-p}}  } \pm
\{\bsi R_1, \{\bsi R_2, T_{\si(1)}, \dots, T_{\si(p)} \}_{p+1},  T_{\si(p+1)}, 
\dots,  T_{\si(n)} \}_{n-p+2} +
\\
\sum_{\substack{ 1\le p \le n-1 \\[0.1cm] \si \in \Sh_{p, n-p}}  } \pm
\{\bsi R_2, \{\bsi R_1, T_{\si(1)}, \dots, T_{\si(p)} \}_{p+1},  T_{\si(p+1)}, 
\dots, T_{\si(n)} \}_{n-p+2} = 0\,.
\end{multline}

This identity is a consequence of a combinatorial 
fact about certain isomorphism classes in the groupoid $\Tree(n)$. 
To formulate this fact, we recall that the set of isomorphism classes
of $r$-labeled planar trees with two nodal vertices are in bijection with 
the set of shuffles
\begin{equation}
\label{shuffles}
\bigsqcup_{p=0}^r \Sh_{p,r-p}\,.
\end{equation}
This bijection assigns to a shuffle $\si \in  \Sh_{p, r-p}$ the $r$-labeled
planar tree $\bt_{\si}$ shown on figure \ref{fig:shuffle-r}. 
\begin{figure}[htp]
\centering
\begin{tikzpicture}[scale=0.5]
\tikzstyle{w}=[circle, draw, minimum size=3, inner sep=1]
\tikzstyle{b}=[circle, draw, fill, minimum size=3, inner sep=1]
\node[b] (l1) at (-1, 4) {};
\draw (-1,4.6) node[anchor=center] {{\small $\si(1)$}};
\draw (0,3.8) node[anchor=center] {{\small $\dots$}};
\node[b] (lp) at (1, 4) {};
\draw (1,4.6) node[anchor=center] {{\small $\si(p)$}};
\node[w] (vv) at (0, 2) {};
\node[b] (lp1) at (3, 2.5) {};
\draw (3,3.1) node[anchor=center] {{\small $\si(p+1)$}};
\draw (4.25,2.4) node[anchor=center] {{\small $\dots$}};
\node[b] (lr) at (5.5, 2.5) {};
\draw (5.5,3.1) node[anchor=center] {{\small $\si(r)$}};
\node[w] (v) at (3, 1) {};
\node[b] (r) at (3, 0) {};
\draw (vv) edge (l1);
\draw (vv) edge (lp);
\draw (v) edge (vv);
\draw (v) edge (lp1);
\draw (v) edge (lr);
\draw (r) edge (v);
\end{tikzpicture}
\caption{\label{fig:shuffle-r} Here $\si$ is a $(p, r-p)$-shuffle}
\end{figure}
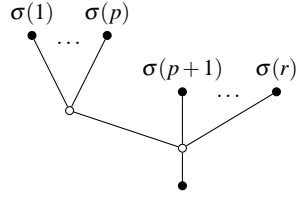

Next, we observe that $\pi_0 (\Tree_{\pf}(n))$  
is in bijection with the set 
\begin{equation}
\label{Isom-pf-desc}
\bigsqcup_{r \ge 1} \mSH_{n,r},
\end{equation}
where\footnote{It is obvious that, for every $\tau \in \mSH_{n,r}$, $\tau(1)=1$\,.}
\begin{equation}
\label{mSH-n-r}
\mSH_{n,r} = 
\end{equation}
$$
 \bigsqcup_{1 \le q_1 < q_2 < \dots < q_{r-1} < q_r = n}  
\{\tau \in \Sh_{q_1, q_2-q_1, \dots, n-q_{r-1}} ~|~ \tau(1)<  \tau(q_1+1) <   
\tau(q_2+1) < \dots < \tau(q_{r-1}+1)\}\,.
$$
This bijection assigns to a shuffle $\tau$ in the set \eqref{Isom-pf-desc}
the isomorphism class of the pitchfork $\bt^{\pf}_{\tau}$ depicted on figure \ref{fig:pf-tau}.
\begin{figure}[htp] 
\centering 
\begin{tikzpicture}[scale=1]
\tikzstyle{w}=[circle, draw, minimum size=3, inner sep=1]
\tikzstyle{b}=[circle, draw, fill, minimum size=3, inner sep=1]
\node[b] (l1) at (0, 3) {};
\draw (0,3.3) node[anchor=center] {{\small $\tau(1)$}};
\draw (0.5,2.8) node[anchor=center] {{\small $\dots$}};
\node[b] (lq1) at (1, 3) {};
\draw (1,3.3) node[anchor=center] {{\small $\tau(q_1)$}};
\node[b] (lq11) at (2.5, 3) {};
\draw (2.5,3.3) node[anchor=center] {{\small $\tau(q_1+1)$}};
\draw (3.5,2.8) node[anchor=center] {{\small $\dots$}};
\node[b] (lq2) at (4.5, 3) {};
\draw (4.5,3.3) node[anchor=center] {{\small $\tau(q_2)$}};
\node[b] (lahz) at (7, 3) {};
\draw (7,3.3) node[anchor=center] {{\small $\tau(q_{r-1})$}};
\draw (8,2.8) node[anchor=center] {{\small $\dots$}};
\node[b] (ln) at (9, 3) {};
\draw (9,3.3) node[anchor=center] {{\small $\tau(n)$}};
\node[w] (v1) at (0.5, 2) {};
\node[w] (v2) at (3.5, 2) {};
\draw (5.75,2) node[anchor=center] {{\large $\dots$}};
\node[w] (vr) at (8, 2) {};
\node[w] (v) at (4.5, 1) {};
\node[b] (r) at (4.5, 0.5) {};
\draw (r) edge (v);
\draw (v) edge (v1);
\draw (v) edge (v2);
\draw (v) edge (vr);
\draw (v1) edge (l1);
\draw (v1) edge (lq1);
\draw (v2) edge (lq11);
\draw (v2) edge (lq2);
\draw (vr) edge (lahz);
\draw (vr) edge (ln);
\end{tikzpicture}
\caption{The pitchfork $\bt^{\pf}_{\tau}$} \label{fig:pf-tau}
\end{figure}

Note that, in the degenerate cases $r=1$ and $r=n$, 
$\mSH_{n,r}$ is the one-element set consisting of the identity 
permutation $\id \in S_n$\,. 
The corresponding pitchforks are shown on figures 
\ref{fig:pf-degen-1} and \ref{fig:pf-degen-n}, respectively. 
\begin{figure}[htp] 
\centering 
\begin{minipage}[t]{0.45\linewidth}
\centering 
\begin{tikzpicture}[scale=1]
\tikzstyle{w}=[circle, draw, minimum size=3, inner sep=1]
\tikzstyle{b}=[circle, draw, fill, minimum size=3, inner sep=1]
\node[b] (l1) at (0, 3) {};
\draw (0,3.3) node[anchor=center] {{\small $1$}};
\node[b] (l2) at (1, 3) {};
\draw (1,3.3) node[anchor=center] {{\small $2$}};
\draw (2,3) node[anchor=center] {{\small $\dots$}};
\node[b] (ln) at (3, 3) {};
\draw (3,3.3) node[anchor=center] {{\small $n$}};
\node[w] (v1) at (1.5, 2) {};
\node[w] (v0) at (1.5, 1) {};
\node[b] (r) at (1.5, 0) {};
\draw (r) edge (v0);
\draw (v0) edge (v1);
\draw (v1) edge (l1);
\draw (v1) edge (l2);
\draw (v1) edge (ln);
\end{tikzpicture}
\caption{The pitchfork for $r=1$} \label{fig:pf-degen-1}
\end{minipage}
\begin{minipage}[t]{0.45\linewidth}
\centering 
\begin{tikzpicture}[scale=1]
\tikzstyle{w}=[circle, draw, minimum size=3, inner sep=1]
\tikzstyle{b}=[circle, draw, fill, minimum size=3, inner sep=1]
\node[b] (l1) at (0, 3) {};
\draw (0,3.3) node[anchor=center] {{\small $1$}};
\node[b] (l2) at (1, 3) {};
\draw (1,3.3) node[anchor=center] {{\small $2$}};
\draw (2,3) node[anchor=center] {{\small $\dots$}};
\node[b] (ln) at (3, 3) {};
\draw (3,3.3) node[anchor=center] {{\small $n$}};
\node[w] (v1) at (0, 2) {};
\node[w] (v2) at (1, 2) {};
\node[w] (vn) at (3, 2) {};
\node[w] (v0) at (1.5, 1) {};
\node[b] (r) at (1.5, 0) {};
\draw (r) edge (v0);
\draw (v0) edge (v1);
\draw (v0) edge (v2);
\draw (v0) edge (vn);
\draw (v1) edge (l1);
\draw (v2) edge (l2);
\draw (vn) edge (ln);
\end{tikzpicture}
\caption{The pitchfork for $r=n$} \label{fig:pf-degen-n}
\end{minipage}
\end{figure}

For every permutation $\tau \in \mSH_{n,r}$ and a shuffle $\si \in \Sh_{p, r-p}$
we can form the following $n$-labeled planar tree
\begin{equation}
\label{pf-tau-sigma}
\bt^{\pf}_{\tau} \bullet_1 \bt_{\si}\,, 
\end{equation}
where $\bt \bullet_j \bt'$ denotes the insertion of the tree $\bt'$ into 
the $j$-th nodal vertex of the tree $\bt$ (see Section 2.2 in \cite{notes}). 

It is clear that, for distinct pairs $(\tau, \si) \in  \mSH_{n,r} \times \Sh_{p, r-p}$,
we get mutually non-isomorphic labeled planar trees.

Let $\tau' \in  \mSH_{n,r'}$ and
$m_i$ be the number of edges which terminate at the $(i+1)$-th 
nodal vertex of $\bt^{\pf}_{\tau'}$. 
For every $\tau'' \in  \mSH_{m_i,r''}$, we may form the $n$-labeled 
planar tree 
\begin{equation}
\label{pf-pf}
\bt^{\pf}_{\tau'} \bullet_{i+1} \bt^{\pf}_{\tau''}\,.
\end{equation}

It is clear that, for distinct triples 
$(\tau', i, \tau'') \in  \mSH_{n,r'} \times \{1,2, \dots, r'\} \times   \mSH_{m_i,r''}$,
the corresponding labeled planar trees \eqref{pf-pf} are mutually non-isomorphic. 
Furthermore, every tree of the form \eqref{pf-pf} is isomorphic to exactly one 
tree of the form \eqref{pf-tau-sigma} and vice versa. 
This is precisely the combinatorial fact that is need to prove that identity
\eqref{RRTTT} holds. 

Indeed, the terms in the expression 
$$
\{\{\bsi R_1, \bsi R_2\}_2, T_1, \dots, T_n \}_{n+1}
$$
involve trees of the form \eqref{pf-tau-sigma} and the
terms in the expressions 
$$
\sum_{\substack{ 1\le p \le n-1 \\[0.1cm] \si \in \Sh_{p, n-p}}  } \pm
\{\bsi R_1, \{\bsi R_2, T_{\si(1)}, \dots, T_{\si(p)} \}_{p+1},  T_{\si(p+1)}, 
\dots,  T_{\si(n)} \}_{n-p+2}
$$
and
$$
\sum_{\substack{ 1\le p \le n-1 \\[0.1cm] \si \in \Sh_{p, n-p}}  } \pm
\{\bsi R_2, \{\bsi R_1, T_{\si(1)}, \dots, T_{\si(p)} \}_{p+1},  T_{\si(p+1)}, 
\dots, T_{\si(n)} \}_{n-p+2}
$$
involve trees of the form \eqref{pf-pf}.

Thus it only remains to check that the sign factors match. 

The remaining identities on $L_{\infty}$-brackets are simpler
and we leave their verification to the reader.  

Claim \ref{cl:L-infty-indeed} is proved. \smartqed\qed

\subsection{The $L_{\infty}$-algebra $\Cyl(C, \cA, \cB)^{\bs F_1}$ and 
its MC elements}

Let 
\begin{equation}
\label{F-1}
F_1 : \cA  \to \cB
\end{equation}
be a map of cochain complexes.

We may view $\bs F_1$ as a degree $1$ element in $\Cyl(C, \cA, \cB)$: 
$$
\bs F_1 \in \bs \Hom (\cA, \cB) \subset \bs\Hom(C(\cA), \cB) \subset \Cyl(C, \cA, \cB)\,.
$$
Since $F_1$ is compatible with the differentials on $\cA$ and $\cB$, 
$\bs F_1$ is obviously a MC element of $\Cyl(C, \cA, \cB)$ and, 
in view of Claim \ref{cl:MC-Cyl}, $\bs F_1$ corresponds to the 
triple: 
\begin{itemize}

\item the trivial $\Cobar(C)$-algebra structure on $\cA$,

\item the trivial $\Cobar(C)$-algebra structure on $\cB$, and 

\item a strict\footnote{i.e. an $\infty$-morphism $F: \cA \leadsto \cB$ whose all 
higher structure maps are zero.} $\infty$-morphism $F_1$ from $\cA$ to $\cB$. 

\end{itemize}

Let  $Q_1, Q_2, \dots, Q_m $ be vectors in $ \Cyl(C, \cA, \cB)$. We recall that 
the components 
of 
$$
\{\underbrace{F_1,  F_1, \dots, F_1}_{n \textrm{ times}}, \bsi Q_1, \bsi Q_2, \dots, \bsi Q_m \}_{n+m}
$$
in $\Hom(C_{\c}(\cA), \cA)$ and  $\Hom(C_{\c}(\cB), \cB)$ are zero if $n+m > 2$\,.
Furthermore, for every  $(X, a_1, \dots, a_k) \in C(\cA)$ 
$$
\{\underbrace{F_1,  F_1, \dots, F_1}_{n \textrm{ times}}, \bsi Q_1, \bsi Q_2, \dots, \bsi Q_m \}_{n+m}
(X, a_1, \dots, a_k) = 0 ~ (\in \cB)
$$
provided $n+m \ge k+2$\,.

Therefore we may twist (see \cite[Remark 3.11.]{DeligneTw}) the $L_{\infty}$-algebra 
on $\Cyl(C, \cA, \cB)$ by the MC element $\bs F_1$. 
We denote by 
\begin{equation}
\label{Cyl-AB-F1}
\Cyl(C, \cA, \cB)^{\bs F_1}
\end{equation}
the $L_{\infty}$-algebra obtained  in this way. 

It is not hard to see that\footnote{In $\Cyl_{\c}(C, \cA, \cB)^{\bs F_1}$, we have  $\bs\Hom(C_{\c}(\cA), \cB)$ 
instead of $ \bs\Hom(C(\cA), \cB)$\,.}  
\begin{equation}
\label{Cyl-c-AB-F1}
\Cyl_{\c}(C, \cA, \cB)^{\bs F_1} :=  \Hom(C_{\c}(\cA), \cA) \oplus
 \bs\Hom(C_{\c}(\cA), \cB) \oplus  \Hom(C_{\c}(\cB), \cB)
\end{equation}
is an $L_{\infty}$-subalgebra of $\Cyl(C, \cA, \cB)^{\bs F_1}$\,. Furthermore, 
Claim \ref{cl:MC-Cyl} implies that 
\begin{claim}
\label{cl:MC-Cyl-F1}
MC elements of the $L_{\infty}$-algebra \eqref{Cyl-c-AB-F1} are 
triples: 
\begin{itemize}

\item  A $\Cobar(C)$-algebra structure on $\cA$, 

\item  A $\Cobar(C)$-algebra structure on $\cB$,

\item an $\infty$-morphism $F : \cA \leadsto \cB$ for which 
the composition
$$
\cA \hookrightarrow C(\cA) \stackrel{F}{ \longrightarrow } C(\cB)
 \stackrel{p_{\cB}}{ \longrightarrow }  \cB  
$$ 
coincides with $F_1$\,.
\end{itemize}
\smartqed\qed
\end{claim}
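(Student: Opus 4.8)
The plan is to obtain Claim~\ref{cl:MC-Cyl-F1} as a formal consequence of Claim~\ref{cl:MC-Cyl} together with the standard description of Maurer--Cartan elements under twisting. First I would recall (cf.~\cite[Remark~3.11.]{DeligneTw}) that, for an $L_\infty$-algebra $\mathfrak{g}$ and a Maurer--Cartan element $m\in\mathfrak{g}$, a degree~$1$ element $x$ is a Maurer--Cartan element of the twisted $L_\infty$-algebra $\mathfrak{g}^{m}$ if and only if $m+x$ is a Maurer--Cartan element of $\mathfrak{g}$. Applying this with $\mathfrak{g}=\Cyl(C,\cA,\cB)$ and $m=\bs F_1$ — which is legitimate because the twisted brackets involved are finite sums, by the observation preceding \eqref{Cyl-AB-F1} — and then invoking Claim~\ref{cl:MC-Cyl}, I would conclude that the Maurer--Cartan elements $V$ of $\Cyl(C,\cA,\cB)^{\bs F_1}$ are exactly the elements $V=\big(Q_{\cA}+\bs\,U_F+Q_{\cB}\big)-\bs F_1$, where $Q_{\cA}$, $Q_{\cB}$ range over $\Cobar(C)$-algebra structures on $\cA$, $\cB$ and $U_F\in\Hom(C(\cA),\cB)$ ranges over the data of $\infty$-morphisms $F : \cA\leadsto\cB$. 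Viewing $\bs F_1$ inside $\bs\Hom(C(\cA),\cB)$ through $\cA\hookrightarrow C(\cA)$, this reads $V=Q_{\cA}+\bs\,(U_F-F_1)+Q_{\cB}$, where $U_F-F_1$ agrees with $U_F$ away from its arity-$1$ component $U_F|_{\cA} : \cA\to\cB$.

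Next I would intersect with the $L_\infty$-subalgebra $\Cyl_{\c}(C,\cA,\cB)^{\bs F_1}$. Since the brackets of an $L_\infty$-subalgebra are the restrictions of the ambient ones, its Maurer--Cartan elements are precisely those Maurer--Cartan elements of $\Cyl(C,\cA,\cB)^{\bs F_1}$ lying in it; in particular the Maurer--Cartan equation for it is well defined, the relevant convergence being a special case of the one verified in the proof of Claim~\ref{cl:MC-Cyl}. Now the element $V=Q_{\cA}+\bs\,(U_F-F_1)+Q_{\cB}$ lies in $\Cyl_{\c}(C,\cA,\cB)^{\bs F_1}$ if and only if $\bs\,(U_F-F_1)$ lies in $\bs\Hom(C_{\c}(\cA),\cB)$, i.e.\ if and only if $U_F-F_1$ vanishes on $\cA\subset C(\cA)$, i.e.\ $U_F|_{\cA}=F_1$. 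Under the dictionary of Claim~\ref{cl:MC-Cyl} the map $U_F$ is the corestriction $p_{\cB}\circ F$ of the coalgebra morphism $F : C(\cA)\to C(\cB)$, so $U_F|_{\cA}$ is the composition $\cA\hookrightarrow C(\cA)\stackrel{F}{\longrightarrow}C(\cB)\stackrel{p_{\cB}}{\longrightarrow}\cB$; hence the constraint $U_F|_{\cA}=F_1$ is exactly the third bullet of the claim, while $Q_{\cA}$ and $Q_{\cB}$ are unaffected because $\bs F_1$ has zero components in $\Hom(C_{\c}(\cA),\cA)$ and $\Hom(C_{\c}(\cB),\cB)$. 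This would complete the identification of the Maurer--Cartan set with the asserted triples.

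I do not expect a conceptual obstacle here: Claim~\ref{cl:MC-Cyl} already contains all the real work, and what remains is the bookkeeping of the suspension $\bs$ and of the fact that, in general, $\cA$ is not a direct summand of $C_{\c}(\cA)$ — which is precisely why passing from $\Cyl(C,\cA,\cB)^{\bs F_1}$ to $\Cyl_{\c}(C,\cA,\cB)^{\bs F_1}$ pins down the linear term of $U_F$ to be $F_1$. The one point I would check with a little care is that twisting by $\bs F_1$ really does act trivially on the $\Hom(C_{\c}(\cA),\cA)$- and $\Hom(C_{\c}(\cB),\cB)$-slots and merely translates $U_F|_{\cA}$ by $F_1$; this is immediate from the explicit brackets, since $\bs F_1$ sits purely in the $\bs\Hom(\cA,\cB)$ summand, and the only bracket that can feed it into the arity-$1$ part of the $\bs\Hom(C(\cA),\cB)$-component is the binary one, where together with $[\pa,-]$ it reproduces just the linear part.
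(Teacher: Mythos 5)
Your proposal is correct and follows essentially the same route the paper takes (the paper simply asserts that Claim \ref{cl:MC-Cyl-F1} follows from Claim \ref{cl:MC-Cyl} via the twisting by $\bs F_1$): you use the standard fact that $x$ is Maurer--Cartan in the twisted algebra iff $\bs F_1 + x$ is Maurer--Cartan in $\Cyl(C,\cA,\cB)$, then cut down to $\Cyl_{\c}(C,\cA,\cB)^{\bs F_1}$ to pin the arity-one part of $U_F = p_{\cB}\circ F$ down to $F_1$. Your spelled-out bookkeeping (convergence via the finiteness observations, and the identification $\Hom(C_{\c}(\cA),\cB)\subset\Hom(C(\cA),\cB)$ as maps vanishing on $\cA$) is exactly the content the paper leaves implicit.
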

\begin{remark}
\label{rem:filtration-Cyl-c}
Using the ascending filtration \eqref{C-circ-filtr} on the 
pseudo-operad $C_{\c}$, we equip the $L_{\infty}$-algebra 
$\Cyl(C, \cA, \cB)$ with the complete descending filtrations: 
\begin{equation}
\label{filtr-Cyl}
\Cyl(C, \cA, \cB) = \cF_0  \Cyl(C, \cA, \cB) \supset  \cF_1  \Cyl(C, \cA, \cB)
\supset  \cF_2  \Cyl(C, \cA, \cB)  \supset \dots\,, 
\end{equation}
where (for $m \ge 1$)
$$
\cF_m  \Cyl(C, \cA, \cB) : = 
$$
\begin{equation}
\label{filtr-Cyl-dfn}
\big\{  Q' \oplus F \oplus Q'' \in  \Hom(C_{\c}(\cA), \cA) \oplus
\bs \Hom(C(\cA), \cB) \oplus  \Hom(C_{\c}(\cB), \cB) 
\end{equation}
$$
Q' (X, a_1, \dots, a_k) = 0\,, \quad 
F(X, a_1, \dots, a_k) = 0\,, \quad 
Q'' (X, b_1, \dots, b_k) =0 \quad \forall ~
X \in \cF^{m-1} C(k) \big\}\,.
$$
The same formulas define a complete descending filtration on 
the $L_{\infty}$-algebras $\Cyl(C, \cA, \cB)^{\bs F_1}$ and 
$\Cyl_{\c}(C, \cA, \cB)^{\bs F_1}$\,. 

We observe that 
\begin{equation}
\label{Cyl-c-advantage}
\Cyl_{\c}(C, \cA, \cB)^{\bs F_1} = \cF_1 \Cyl_{\c}(C, \cA, \cB)^{\bs F_1} 
\end{equation}
and hence $\Cyl_{\c}(C, \cA, \cB)^{\bs F_1} $ is pro-nilpotent. 
Later, we will use this advantage of  $\Cyl_{\c}(C, \cA, \cB)^{\bs F_1}$
over $\Cyl(C, \cA, \cB)^{\bs F_1}$\,.
\end{remark}

\subsection{What if $F_1$ is a quasi-isomorphism?}

Starting with a chain map \eqref{F-1} we define two maps
of cochain complexes: 
\begin{equation}
\label{f}
P \mapsto f(P) = F_1 \circ P  ~ : ~
\Hom(C_{\c}(\cA), \cA) \to \Hom(C_{\c}(\cA), \cB)
\end{equation}
\begin{equation}
\label{wt-f}
R \mapsto \wt{f}(R) = R \circ C_{\c}(F_1)   ~ : ~
\Hom(C_{\c}(\cB), \cB) \to \Hom(C_{\c}(\cA), \cB)
\end{equation}
and observe that the cochain complex $\Cyl_{\c}(C, \cA, \cB)^{\bs F_1} $ is precisely
the cochain complex $\Cyl(f, \wt{f})$ defined in \eqref{Cyl-f-wtf}, \eqref{diff-Cyl} 
in the Appendix.

Hence, using Lemma \ref{lem:pi-q-iso}, we deduce the following statement: 
\begin{prop}
\label{prop:proj-s-are-q-iso}
If the chain map $F_1 :\cA \to \cB $ induces an isomorphism on 
the level of cohomology then so do the following canonical projections: 
\begin{equation}
\label{pi-cA}
\pi_{\cA} : \Cyl_{\c}(C, \cA, \cB)^{\bs F_1}
\to  \Hom(C_{\c}(\cA), \cA)\,,
\end{equation}
\begin{equation}
\label{pi-cB}
\pi_{\cB} : \Cyl_{\c}(C, \cA, \cB)^{\bs F_1}
 \to  \Hom(C_{\c}(\cB), \cB)\,.
\end{equation}
The maps $\pi_{\cA}$ and $\pi_{\cB}$ are strict homomorphisms of 
$L_{\infty}$-algebras.  
\end{prop}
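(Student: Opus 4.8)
The plan is to deduce Proposition \ref{prop:proj-s-are-q-iso} from a purely homological lemma about the total complex of a ``double mapping cylinder'' and then observe that the $L_\infty$-structure does not interfere with the claimed statement, since the assertion is only about the underlying cochain complexes together with the linear parts of the projections. First I would recall the identification, already made in the excerpt, of $\Cyl_{\c}(C, \cA, \cB)^{\bs F_1}$ with the complex $\Cyl(f, \wt f)$ built in the Appendix from the two chain maps
\[
 f : \Hom(C_{\c}(\cA), \cA) \to \Hom(C_{\c}(\cA), \cB), \qquad
 \wt f : \Hom(C_{\c}(\cB), \cB) \to \Hom(C_{\c}(\cA), \cB).
\]
So it suffices to invoke Lemma \ref{lem:pi-q-iso} of the Appendix, which (I anticipate) says that if $f$ and $\wt f$ are quasi-isomorphisms then the two projections $\pi_{\cA}$ and $\pi_{\cB}$ out of $\Cyl(f,\wt f)$ are quasi-isomorphisms. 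The whole content of the proof is therefore to check the hypothesis of that lemma, namely that both $f$ and $\wt f$ are quasi-isomorphisms under the assumption that $F_1$ induces an isomorphism on cohomology.

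Next I would verify the quasi-isomorphism hypotheses. For $f(P) = F_1 \circ P$: since $F_1$ is a quasi-isomorphism of cochain complexes and $\Hom(C_\c(\cA), -)$ is, componentwise in each arity, just a product of ordinary hom-complexes $\Hom(C_\c(n)\otimes \cA^{\otimes n}, -)^{S_n}$, post-composition with a quasi-isomorphism is again a quasi-isomorphism — this is the standard fact that $\Hom(M,-)$ preserves quasi-isomorphisms when $M$ is a complex of vector spaces (everything is over a field of characteristic zero, so there are no projectivity issues, and taking $S_n$-invariants is exact). One subtlety worth a sentence: the direct product over arities $n\ge 1$ of the arity-wise quasi-isomorphisms is still a quasi-isomorphism because, by Condition \ref{cond:C} (the exhaustive filtration, or equivalently $C(1)\cong\bbK$, $C(0)=\bfzero$), cohomology commutes with this product — or one simply notes that the filtration \eqref{filtr-Cyl} is complete and bounded-below-exhaustive so a spectral-sequence/filtered-colimit argument applies. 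For $\wt f(R) = R \circ C_\c(F_1)$: here I would use that $C_\c(F_1) : C_\c(\cA) \to C_\c(\cB)$ is itself a quasi-isomorphism, which follows from $F_1$ being a quasi-isomorphism together with the Künneth formula (again over a field) applied arity by arity, plus exactness of $S_n$-invariants; then pre-composition with a quasi-isomorphism of cochain complexes of vector spaces is a quasi-isomorphism on hom-complexes, by the same reasoning.

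Finally I would record that $\pi_{\cA}$ and $\pi_{\cB}$ are strict $L_\infty$-morphisms. This is essentially immediate from the construction of the brackets in \eqref{brack-cA-cA}, \eqref{brack-cB-cB}, \eqref{brack-cA-mixed}, \eqref{brack-cB-mixed}: each bracket with at least one $\bs\Hom(C_\c(\cA),\cB)$-input lands in the $\bs\Hom$ summand, so projecting onto the $\Hom(C_\c(\cA),\cA)$ (resp. $\Hom(C_\c(\cB),\cB)$) summand kills all such mixed brackets and intertwines the surviving bracket with the convolution Lie bracket on the target; hence $\pi_{\cA}$ (resp. $\pi_{\cB}$) commutes with all brackets on the nose, so it is strict. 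The main obstacle, such as it is, is purely bookkeeping: making sure the ``product over all arities'' does not spoil the quasi-isomorphism property, which is exactly why Remark \ref{rem:filtration-Cyl-c} set up the complete descending filtration \eqref{filtr-Cyl} — one runs the associated-graded argument, notes the associated graded of $\pi_{\cA}$, $\pi_{\cB}$ is a quasi-isomorphism in each filtration layer by the vector-space Künneth/hom arguments above, and concludes by the comparison theorem for complete filtered complexes. I would package all of this by simply citing Lemma \ref{lem:pi-q-iso} once the two hypotheses on $f$ and $\wt f$ are checked.
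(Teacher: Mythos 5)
Your proposal is correct and follows essentially the same route as the paper: identify $\Cyl_{\c}(C,\cA,\cB)^{\bs F_1}$ with $\Cyl(f,\wt f)$, verify that $f$ and $\wt f$ are quasi-isomorphisms using exactness of $\Hom$, $\otimes$ and $S_n$-(co)invariants over a field of characteristic zero, invoke Lemma \ref{lem:pi-q-iso}, and read off strictness of $\pi_{\cA}$, $\pi_{\cB}$ from the form of the brackets. Your extra worry about the product over arities is harmless but unnecessary, since cohomology commutes with arbitrary products of cochain complexes of vector spaces, so no filtration or spectral-sequence argument is needed there.
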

\begin{proof}
\smartqed
Since we work over a field of characteristic zero, the functors 
$\Hom$, $\otimes$, as well as the functors of taking (co)invariants
with respect to actions of symmetric groups preserve quasi-isomorphisms. 
Therefore the maps \eqref{f} and \eqref{wt-f} are quasi-isomorphisms 
of cochain complexes. 

Thus the first statement follows directly from  Lemma \ref{lem:pi-q-iso}.

The second statement is an obvious consequence of the definition of
$L_{\infty}$-brackets on $ \Cyl_{\c}(C, \cA, \cB)^{\bs F_1}$\,. 
\qed
\end{proof}

\subsection{Proof of Theorem \ref{thm:main}} 
\label{sec:proof}

We will now give a proof of Theorem \ref{thm:main}

Let $\cA$ and $\cB$ be homotopy algebras of type $C$\,. 
As said above, we may assume, without loss of generality, that $\cA$ and $\cB$ 
are connected by a single $\infty$ quasi-isomorphism: 
\begin{equation}
\label{F-cA-cB}
F : \cA \leadsto \cB\,.
\end{equation}

We denote by $\al^{\Cyl}$ the MC element of  $\Cyl_{\c}(C, \cA, \cB)^{\bs F_1}$ which 
corresponds to the triple
\begin{itemize}
\item the homotopy algebra structure on $\cA$, 
\item the homotopy algebra structure on $\cB$, and
\item the $\infty$-morphism $F$\,.
\end{itemize}

Due to \eqref{Cyl-c-advantage}, we may twist  (see \cite[Remark 3.11.]{DeligneTw}) the 
$L_{\infty}$-algebra   $\Cyl_{\c}(C, \cA, \cB)^{\bs F_1}$ by 
the MC element $\al^{\Cyl}$\,. We denote by 
\begin{equation}
\label{Def-cA-cB}
\Def(\cA \stackrel{F}{\leadsto} \cB)
\end{equation}
the $L_{\infty}$-algebra which is obtained from $\Cyl_{\c}(C, \cA, \cB)^{\bs F_1}$ 
via twisting by the MC element $\al^{\Cyl}$. 

We also denote by $Q_{\cA}$ (resp. $Q_{\cB}$) the MC element of 
$\Conv(C_{\c}, \End_{\cA})$ (resp. $\Conv(C_{\c}, \End_{\cB})$) corresponding 
to the homotopy algebra structure on $\cA$ (resp. $\cB$) and recall 
that $\Def(\cA)$ (resp. $\Def(\cB)$) is obtained from  $\Conv(C_{\c}, \End_{\cA})$ 
(resp. $\Conv(C_{\c}, \End_{\cB})$) via twisting by the 
MC element $Q_{\cA}$ (resp. $Q_{\cB}$). 

It is easy to see that 
\begin{equation}
\label{pi-MC}
\pi_{\cA}(\al^{\Cyl}) = Q_{\cA}\,, 
\qquad 
\pi_{\cB}(\al^{\Cyl}) = Q_{\cB}\,. 
\end{equation}

Since $\pi_{\cA}$ \eqref{pi-cA} and $\pi_{\cB}$ \eqref{pi-cB} are strict 
$L_{\infty}$-morphisms, they do not change under twisting by MC elements. 
Thus, we conclude that, the same maps $\pi_{\cA}$ and $\pi_{\cB}$ 
give us (strict) $L_{\infty}$-morphisms 
\begin{equation}
\label{pi-cA-pi-cB}
\begin{array}{c}
\pi_{\cA} : \Def(\cA \stackrel{F}{\leadsto} \cB)  \to \Def(\cA)\,,  \\[0.3cm]
\pi_{\cB} :  \Def(\cA \stackrel{F}{\leadsto} \cB) \to \Def(\cB)\,. 
\end{array}
\end{equation}

According to \cite[Proposition 6.2]{DeligneTw}, twisting preserves quasi-isomorphisms. 
Thus, due to Proposition \ref{prop:proj-s-are-q-iso}, the two arrows in \eqref{pi-cA-pi-cB} are 
(strict) $L_{\infty}$-quasi-isomorphisms, as desired. 

Theorem \ref{thm:main} is proven. \smartqed\qed

\section{Sheaves of homotopy algebras}
\label{sec:sheaves}

For a topological space $X$ we consider the category $\dgSh_X$ of dg sheaves
(i.e. sheaves of unbounded cochain complexes of $\bbK$-vector spaces). 
We recall that $\dgSh_X$ is a symmetric monoidal category for which the monoidal product is 
the tensor product followed by sheafification. 

Given coaugmented dg cooperad $C$ (satisfying condition \eqref{cond:C}) 
one may give the following naive definition of a homotopy algebra of type 
$C$ in the category $\dgSh_X$:  
\begin{defi}[Naive!]
\label{dfn:naive-alg}
We say that a dg sheaf $\cA$ on $X$ carries a structure of \emph{a homotopy 
algebra of type} $C$ if $\cA$ is an algebra over the dg operad $\Cobar(C)$\,.
\end{defi}
One can equivalently define  a homotopy algebra of type $C$ by 
considering coderivations of 
the cofree $C$-coalgebra (in the category $\dgSh_X$) 
\begin{equation}
\label{C-cA}
C(\cA) := \bigoplus_n \left( C(n)\otimes \cA^{\otimes n} \right)_{S_n}.
\end{equation}
In other words, a homotopy algebra of type $C$ on $\cA$ is a degree $1$ 
coderivation $\cQ$ of $C(\cA)$ satisfying the MC equation and the additional 
condition
$$
\cQ \Big|_{\cA} ~ = ~0\,.
$$

Given such a coderivation $\cQ$, it is natural to consider the $C$-coalgebra
\eqref{C-cA} with the new differential 
\begin{equation}
\label{diff-C-cA}
\pa + \cQ
\end{equation}
where $\pa$ comes from the differentials on $C$ and $\cA$. 

This observation motivates the following naive definition of $\infty$-morphism 
of homotopy algebra in $\dgSh_X$: 
\begin{defi}[Naive!]
\label{dfn:naive-morph}
Let $\cA$ and $\cB$ be homotopy algebras of type $C$ in 
$\dgSh_X$ and let $\cQ_{\cA}$ and $\cQ_{\cB}$ be the 
corresponding coderivations of $C(\cA)$ and $C(\cB)$ respectively. 
An $\infty$-\emph{morphism} $F : \cA \leadsto \cB$ is a map of 
sheaves 
$$
F : C(\cA) \to C(\cB) 
$$  
which is compatible with the $C$-coalgebra structure and 
the differentials $\pa+ \cQ_{\cA}$, $\pa + \cQ_{\cB}$\,. 
\end{defi}

An important disadvantage of the above naive definitions is that they 
do not admit an analogue of the homotopy transfer theorem 
\cite[Theorem 10.3.2]{LV-book}.
For this reason we propose ``more mature'' definitions based on 
the use of the Thom-Sullivan normalization \cite{BG}, \cite[Appendix A]{VdB}.
 
Let $\mU$ be a covering of $X$ and $\cA$ be a dg sheaf on $X$\,.  
The associated cosimplicial set $\mU(\cA)$ is naturally a cosimplicial 
cochain complex. So, applying the Thom-Sullivan functor $N^{\TS}$ to 
$\mU(\cA)$, we get a cochain complex 
\begin{equation}
\label{N-TS-mU-cA}
N^{\TS} \mU(\cA)
\end{equation}
which computes the Cech hyper-cohomology of $\cA$ with respect to 
the cover $\mU$. 

Let us assume, for simplicity, that there exists an acyclic covering $\mU$ for $\cA$. 
In particular, $\check H_\mU(\cA)\cong H(\cA)$ agrees with the sheaf cohomology of $\cA$. 

Then, we have the following definition:
\begin{defi}
\label{dfn:homot-alg-Sh}
\emph{A homotopy algebra structure of type} $C$ on a dg 
sheaf $\cA$ is $\Cobar(C)$-algebra structure on 
the cochain complex \eqref{N-TS-mU-cA}. 
\end{defi}
\begin{remark}
\label{rem:from-naive-alg}
Since, the Thom-Sullivan normalization $N^{\TS}$ is a symmetric monoidal functor 
from cosimplicial cochain complexes into cochain complexes, a homotopy algebra 
structure on $\cA$ in the sense of naive Definition \ref{dfn:naive-alg} is a homotopy 
algebra structure on $\cA$ in the sense of Definition \ref{dfn:homot-alg-Sh}. 
\end{remark}
\begin{remark}
\label{rem:cover}
Let $\mU'$ be another acyclic covering of $X$ and $\mV$ be a common acyclic refinement 
of $\mU$ and $\mU'$. Since the functor $N^{\TS}$ preserves quasi-isomorphisms,
the cochain complexes $N^{\TS} \mU(\cA)$ and $N^{\TS} \mU'(\cA)$ are connected 
by the following pair of quasi-isomorphisms:
\begin{equation}
\label{pair-q-iso-N-TS}
N^{\TS} \mU(\cA) \stackrel{\sim}{\,\longrightarrow\,}  N^{\TS}\mV(\cA) \stackrel{\sim}{\, \longleftarrow\,} 
N^{\TS}\mU'(\cA)\,.
\end{equation}
Hence, using the usual homotopy transfer theorem \cite[Theorem 10.3.2]{LV-book}, 
we conclude that the notion of homotopy algebra structure on a dg sheaf $\cA$
is, in some sense, independent on the choice of acyclic covering.
\end{remark}

Proceeding further in this fashion, we give the definition of 
an $\infty$-morphism (and $\infty$ quasi-isomorphism) in the setting of sheaves:
\begin{defi}
\label{dfn:morph-Sh}
Let $\cA$ and $\cB$ be dg sheaves on $X$ equipped with structures 
of homotopy algebras of type $C$. \emph{An $\infty$-morphism} $F$ from 
$\cA$ to $\cB$ is an $\infty$-morphism 
\begin{equation}
\label{F-N-TS}
F : N^{\TS} \mU(\cA) \leadsto N^{\TS} \mU(\cB)
\end{equation}
of the corresponding homotopy algebras (in the category of cochain complexes)
for some acyclic cover $\mU$. If \eqref{F-N-TS} is an $\infty$ quasi-isomorphism 
then, we say that,  $F$ is an $\infty$ \emph{quasi-isomorphism} from $\cA$ to $\cB$\,.
\end{defi}
\begin{remark}
\label{rem:from-naive-morph}
Again, since the Thom-Sullivan normalization $N^{\TS}$ is a symmetric monoidal functor 
from cosimplicial cochain complexes into cochain complexes, an $\infty$-morphism 
in the sense of naive Definition \ref{dfn:naive-morph} gives us an $\infty$-morphism 
in the of Definition \ref{dfn:morph-Sh}. 
\end{remark}

\subsection{The deformation complex in the setting of sheaves}
Let $X$ be a topological space and $\cA$ be a dg sheaf on $X$. 
Let us assume that $\mU$ is an acyclic (for $\cA$) cover of $X$
and $\cA$ carries a homotopy algebra of type $C$ defined in terms 
of this cover $\mU$. 

\begin{defi}
\label{dfn:Def-Sh}
\emph{The deformation complex} of the sheaf of homotopy 
algebras $\cA$ is 
\[
\Def(\cA):= \Def(N^{\TS} \mU(\cA)).
\]
\end{defi}
\begin{remark}
\label{rem:indep-cover}
The above definition of the deformation complex is
independent on the choice of the acyclic cover in the following 
sense: Let $\mU'$ be another acyclic cover of $X$. Since the 
cochain complexes $N^{\TS} \mU(\cA)$ and $N^{\TS} \mU'(\cA)$
are connected by the pair of quasi-isomorphisms  
\eqref{pair-q-iso-N-TS}, Theorem \ref{thm:main} and the homotopy 
transfer theorem imply that the deformation complexes corresponding
to different acyclic coverings are connected by a sequence of quasi-isomorphisms 
of dg Lie algebras.
\end{remark}

Theorem \ref{thm:main} has the following obvious implication 
\begin{cor}
\label{cor:main}
Let $\cA$ and $\cB$ be dg sheaves on $X$ equipped with structures 
of homotopy algebras of type $C$. If $\cA$ and $\cB$ are connected by 
a sequence of $\infty$ quasi-isomorphisms then $\Def(\cA)$ and $\Def(\cB)$
are quasi-isomorphic dg Lie algebras. \smartqed\qed
\end{cor}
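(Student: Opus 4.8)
The plan is to deduce the corollary from Theorem~\ref{thm:main} together with the cover-independence recorded in Remark~\ref{rem:indep-cover}, so that no genuinely new work is needed beyond organising the definitions of Section~\ref{sec:sheaves}. First I would reduce to a single $\infty$ quasi-isomorphism: by hypothesis $\cA$ and $\cB$ sit at the ends of a finite chain of dg sheaves of homotopy algebras of type $C$ in which consecutive members are joined by an $\infty$ quasi-isomorphism pointing one way or the other, and since zigzags of quasi-isomorphisms of dg Lie algebras concatenate, it suffices to treat one such arrow $F:\cA\leadsto\cB$ (the direction in which $F$ points being immaterial, as the relation ``$L_\infty$-quasi-isomorphic'' is symmetric, exactly as in the proof of Theorem~\ref{thm:main}).

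Next I would unwind the sheaf-theoretic definitions. By Definition~\ref{dfn:Def-Sh} one has $\Def(\cA)=\Def\big(N^{\TS}\mU_{\cA}(\cA)\big)$ and $\Def(\cB)=\Def\big(N^{\TS}\mU_{\cB}(\cB)\big)$ for the acyclic covers $\mU_{\cA}$, $\mU_{\cB}$ fixed when the homotopy algebra structures on the sheaves were chosen, while by Definition~\ref{dfn:morph-Sh} the datum of $F$ is precisely an $\infty$ quasi-isomorphism $N^{\TS}\mU(\cA)\leadsto N^{\TS}\mU(\cB)$ of homotopy algebras of cochain complexes for some acyclic cover $\mU$. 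These three covers need not agree, but by Remark~\ref{rem:indep-cover}---which itself rests on Theorem~\ref{thm:main} and on the homotopy transfer theorem \cite[Theorem 10.3.2]{LV-book} applied to the pair of quasi-isomorphisms \eqref{pair-q-iso-N-TS}---the deformation complexes built from any two acyclic covers of $X$ are joined by a zigzag of quasi-isomorphisms of dg Lie algebras. Prepending and appending such zigzags, I may therefore assume that all covers in sight are the single cover $\mU$.

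Now Theorem~\ref{thm:main} applies verbatim: $N^{\TS}\mU(\cA)$ and $N^{\TS}\mU(\cB)$ are homotopy algebras of type $C$ in cochain complexes connected by a single $\infty$ quasi-isomorphism, so the theorem, via the explicit construction in its proof, yields the zigzag
\[
\Def\big(N^{\TS}\mU(\cA)\big) ~\longleftarrow~ \Def\big(N^{\TS}\mU(\cA)\stackrel{F}{\leadsto}N^{\TS}\mU(\cB)\big) ~\longrightarrow~ \Def\big(N^{\TS}\mU(\cB)\big)
\]
of $L_\infty$-quasi-isomorphisms, i.e.\ a zigzag between $\Def(\cA)$ and $\Def(\cB)$. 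To land in the strict world demanded by the statement, I would finally observe that the two outer terms are honest dg Lie algebras and that the two maps are \emph{strict} $L_\infty$-morphisms (only the middle term carries nontrivial higher brackets, coming from \eqref{brack-cB-mixed}); applying to this zigzag the standard rectification of $L_\infty$-algebras to dg Lie algebras (via the bar--cobar adjunction between cocommutative dg coalgebras and dg Lie algebras), which sends $L_\infty$-quasi-isomorphisms to quasi-isomorphisms of dg Lie algebras and sends any dg Lie algebra to a dg Lie algebra quasi-isomorphic to it, converts the displayed zigzag into a genuine zigzag of quasi-isomorphisms of dg Lie algebras connecting $\Def(\cA)$ with $\Def(\cB)$.

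The main obstacle here is, in fact, not a real one: all the combinatorial and sign-theoretic content was already handled in Theorem~\ref{thm:main} and Claim~\ref{cl:L-infty-indeed}. The only two points that call for a moment's care are the alignment of the various acyclic covers---dispatched by Remark~\ref{rem:indep-cover}---and the passage from an $L_\infty$-quasi-isomorphism between dg Lie algebras to an honest zigzag of dg Lie quasi-isomorphisms, which is a standard feature of the homotopy theory of Lie algebras in characteristic zero and does not affect the substance of the argument.
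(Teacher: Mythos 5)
Your proposal is correct and follows exactly the route the paper intends: the paper offers no written proof, presenting Corollary~\ref{cor:main} as an ``obvious implication'' of Theorem~\ref{thm:main} once Definitions~\ref{dfn:homot-alg-Sh}--\ref{dfn:Def-Sh} are unwound, which is precisely what you do. Your two extra points of care --- aligning the acyclic covers via Remark~\ref{rem:indep-cover} and rectifying the zigzag of strict $L_\infty$-quasi-isomorphisms (whose middle term $\Def(\cA \stackrel{F}{\leadsto} \cB)$ is only an $L_\infty$-algebra) into quasi-isomorphisms of dg Lie algebras --- are exactly the details the paper leaves implicit, and you handle them correctly.
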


\subsection{An application of Corollary \ref{cor:main}}

In applications we often deal with honest (versus $\infty$) algebraic 
structures on sheaves and maps of sheaves which are 
compatible with these algebraic structures on the nose 
(not up to homotopy). Here we describe a setting of this kind 
in which Corollary \ref{cor:main} can be applied. 

Let $O$ be a dg operad and $\Cobar(C)$ be a resolution of $O$
for which the cooperad $C$ satisfies condition \eqref{cond:C}.  

Every dg sheaf of $O$-algebras $\cA$ is naturally a sheaf of 
$\Cobar(C)$-algebras. Hence, $\cA$ carries a structure of homotopy 
algebra of type $C$ and we define the deformation complex of $\cA$ as
$$
\Def(\cA) : = \Def(N^{\TS} \mU(\cA))\,.
$$

\begin{thm}
\label{thm:applic}
Let $\cA$ and $\cB$ be dg sheaves of $O$-algebras on 
a topological space $X$. If there exists a sequence of quasi-isomorphisms 
of dg sheaves of $O$-algebras
$$
\cA \stackrel{\sim}{\,\leftarrow\, } \cA_1 \stackrel{\sim}{\,\rightarrow\, }
\cA_2  \stackrel{\sim}{\,\leftarrow\, } \dots   \stackrel{\sim}{\,\rightarrow\, } \cA_n
 \stackrel{\sim}{\,\rightarrow\, } \cB
$$
then the dg Lie algebras $\Def(\cA)$ and $\Def(\cB)$ are quasi-isomorphic.
\end{thm}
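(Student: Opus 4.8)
The plan is to deduce Theorem \ref{thm:applic} from Corollary \ref{cor:main} by transporting the given zigzag of strict quasi-isomorphisms of dg sheaves of $O$-algebras through the Thom--Sullivan functor $N^{\TS}$. The point is that all the homotopical work has already been done in Theorem \ref{thm:main}; what remains is a bookkeeping argument, plus the usual point-set care with covers.

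First I would fix a single covering $\mU$ of $X$ that is acyclic simultaneously for all of $\cA, \cA_1, \dots, \cA_n, \cB$, obtained by taking a common refinement of acyclic covers chosen for the individual sheaves; here I adopt the same blanket simplifying assumption on the existence of (acyclic refinements of) covers that is already in force throughout this section, as in the discussion preceding Definition \ref{dfn:homot-alg-Sh}. By Remark \ref{rem:indep-cover}, using this common $\mU$ in Definition \ref{dfn:Def-Sh} changes each of $\Def(\cA)$ and $\Def(\cB)$ only up to a chain of quasi-isomorphisms of dg Lie algebras, so it suffices to connect $\Def(N^{\TS}\mU(\cA))$ with $\Def(N^{\TS}\mU(\cB))$.

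Next I would treat one arrow of the chain, say a strict quasi-isomorphism of dg sheaves of $O$-algebras $f\colon \cA_i \to \cA_{i+1}$ (arrows pointing the other way are handled the same way). Since $\Cobar(C)$ resolves $O$, this $f$ is in particular a strict morphism of sheaves of $\Cobar(C)$-algebras, i.e.\ an $\infty$-morphism in the sense of naive Definition \ref{dfn:naive-morph}; as $N^{\TS}$ is symmetric monoidal, Remarks \ref{rem:from-naive-alg} and \ref{rem:from-naive-morph} then give a strict $\Cobar(C)$-algebra map of cochain complexes $N^{\TS}\mU(f)\colon N^{\TS}\mU(\cA_i) \to N^{\TS}\mU(\cA_{i+1})$, hence, applying the cofree $C$-coalgebra functor, an $\infty$-morphism in the sense of Definition \ref{dfn:infty-morph} whose linear part is $N^{\TS}\mU(f)$ itself. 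Consequently this $\infty$-morphism is an $\infty$ quasi-isomorphism as soon as $N^{\TS}\mU(f)$ is a quasi-isomorphism of cochain complexes. I would verify the latter by noting that, $\mU$ being acyclic for both sheaves, $N^{\TS}\mU(\cA_i)$ and $N^{\TS}\mU(\cA_{i+1})$ compute the hypercohomology of $\cA_i$ and $\cA_{i+1}$, while a quasi-isomorphism of dg sheaves induces an isomorphism on hypercohomology, and that isomorphism is precisely the one induced by $N^{\TS}\mU(f)$ on cohomology.

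Running this over the whole chain produces a sequence of $\infty$ quasi-isomorphisms of homotopy algebras of type $C$ connecting $N^{\TS}\mU(\cA)$ with $N^{\TS}\mU(\cB)$, and Corollary \ref{cor:main} then yields the desired quasi-isomorphism of dg Lie algebras $\Def(N^{\TS}\mU(\cA)) \simeq \Def(N^{\TS}\mU(\cB))$; together with the first step this proves the theorem. I expect the only genuine obstacle to be the point-set issue in the first step --- guaranteeing a common acyclic refinement of the chosen covers --- which is of exactly the same nature as the simplification already made in this section and which I would simply carry over; everything else reduces to the symmetric monoidality of $N^{\TS}$, the criterion of Definition \ref{dfn:infty-morph}, and Corollary \ref{cor:main}.
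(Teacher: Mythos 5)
Your proposal is correct and follows essentially the same route as the paper: apply the symmetric monoidality and quasi-isomorphism-preservation of $N^{\TS}$ to turn each strict quasi-isomorphism of dg sheaves of $O$-algebras into an $\infty$ quasi-isomorphism of the associated $\Cobar(C)$-algebras, then invoke Corollary \ref{cor:main}. The only cosmetic difference is that the paper first reduces to a single arrow while you fix a common acyclic cover and treat the whole zigzag at once (being somewhat more explicit about the cover issue via Remark \ref{rem:indep-cover}), which is a harmless variation.
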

\begin{proof}
\smartqed
It is suffices to prove this theorem in the case when $\cA$ and $\cB$
are connected by a single quasi-isomorphism 
\begin{equation}
\label{f-cA-cB}
f :  \cA \stackrel{\sim}{\,\rightarrow\, }  \cB
\end{equation}
of dg sheaves of $O$-algebras. 

Since the functor $N^{\TS}$ preserves quasi-isomorphisms, 
$f$ induces a quasi-isomorphism 
\begin{equation}
\label{f-star}
f_* : N^{\TS}\mU(\cA) \stackrel{\sim}{\,\rightarrow\, }  N^{\TS}\mU(\cB)
\end{equation}
for any acyclic cover $\mU$. 

Furthermore, since $N^{\TS}$ is compatible with the symmetric monoidal 
structure, the map $f_*$ is compatible with the $O$-algebra 
structures on  $N^{\TS}\mU(\cA)$ and $ N^{\TS}\mU(\cB)$. 

Therefore, $f_*$ may be viewed as an $\infty$ quasi-isomorphism 
from $\cA$ to $\cB$.   

Thus  Corollary \ref{cor:main} implies the desired statement.
\qed 
\end{proof}

\subsection{A Concluding remark about Definitions \ref{dfn:homot-alg-Sh}, 
\ref{dfn:morph-Sh}, and \ref{dfn:Def-Sh}} 

For certain applications, Definitions \ref{dfn:homot-alg-Sh}, 
\ref{dfn:morph-Sh}, and \ref{dfn:Def-Sh} may still be naive. 
One may ask about a possibility to extend the notion of homotopy 
algebras to the setting of twisted complexes \cite{OB}, \cite{OTT}, 
\cite{Chern-Twisted},  \cite{Gillet}. For some application one may need a universal way 
of keeping track on ``dependencies on covers'' by using the notion 
of hypercover. For other applications one may need a notion of 
deformation complex which would also govern deformations of 
$\cA$ as a sheaf or possibly as a (higher) stack. 

However, for applications considered in \cite{Chern}, 
the framework of  Definitions \ref{dfn:homot-alg-Sh}, 
\ref{dfn:morph-Sh}, and \ref{dfn:Def-Sh} is sufficient.


\section*{Appendix: Cylinder type construction}
\addcontentsline{toc}{section}{Appendix: Cylinder type construction}

Given a pair $(f, \wt{f})$ of maps of cochain complexes
\begin{equation}
\label{pair}
V \stackrel{f}{\longrightarrow} W  \stackrel{\wt{f}}{\longleftarrow} \wt{V}\,,
\end{equation}
we form another cochain complex $\Cyl(f, \wt{f})$\,.
As a graded vector space 
\begin{equation}
\label{Cyl-f-wtf}
\Cyl(f, \wt{f}) := V \oplus \bs W \oplus \wt{V}
\end{equation}
and the differential $\pa^{\Cyl}$ is defined by the formula: 
\begin{equation}
\label{diff-Cyl}
\pa^{\Cyl} (v + \bs w + \wt{v})  := 
\pa v + \bs (f(v) - \pa w + \wt{f}(\wt{v} )) + \pa \wt{v}\,.    
\end{equation}
The equation 
$$
\pa^{\Cyl}  \circ \pa^{\Cyl} = 0
$$
is a consequence of $\pa^2 = 0$ and the compatibility of 
$f$ (resp. $\wt{f}$) with the differentials\footnote{By abuse of notation, we denote 
by the same letter $\pa$ the differential on $V$, $W$, and $\wt{V}$.}
on $V$, $W$, and $\wt{V}$\,. 

We have the obvious pair of maps of cochain complexes: 
\begin{equation}
\label{projections}
V \stackrel{\pi_{V}}{\longleftarrow} 
\Cyl(f, \wt{f}) \stackrel{\pi_{\wt{V}}}{\longrightarrow} \wt{V}
\end{equation}
\begin{equation}
\label{pi-V}
\pi_{V} (v + \bs w + \wt{v}) = v, 
\end{equation}
\begin{equation}
\label{pi-wtV}
\pi_{\wt{V}} (v + \bs w + \wt{v}) = \wt{v}.  
\end{equation}
  
We claim that 
\begin{lemma}
\label{lem:pi-q-iso}
If $f$ and $\wt{f}$ are quasi-isomorphisms of cochain complexes, 
then so are $\pi_{\cV}$ and $\pi_{\wt{\cV}}$\,.
\end{lemma}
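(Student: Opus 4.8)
The statement is a standard fact about the ``double mapping cylinder'' of a pair of chain maps, and the cleanest route is to build the quasi-isomorphism property out of known homological algebra rather than computing cohomology by hand. The plan is to recognize $\Cyl(f,\wt f)$, up to an evident isomorphism of complexes, as an iterated mapping cone, and then invoke the long exact sequence of a cone together with the five lemma. Concretely, one notices that the differential $\pa^{\Cyl}(v+\bs w+\wt v)=\pa v+\bs(f(v)-\pa w+\wt f(\wt v))+\pa\wt v$ exhibits $\Cyl(f,\wt f)$ as the cone of the map
\begin{equation}
\label{cone-description}
(f,\wt f)\ :\ V\oplus\wt V\ \longrightarrow\ W,\qquad (v,\wt v)\mapsto f(v)+\wt f(\wt v),
\end{equation}
shifted appropriately: indeed $\Cone(g)=\bs(\text{dom }g)\oplus(\text{codom }g)$ with the twisted differential, and here after reshuffling summands $\Cyl(f,\wt f)\cong \Cone\big((f,\wt f)\colon V\oplus\wt V\to W\big)[-1]$ — the sign conventions in \eqref{diff-Cyl} are exactly those of a cone differential. (Alternatively, and perhaps more transparently, one can filter $\Cyl(f,\wt f)$ by the subcomplex $\bs W\oplus \wt V$, which is itself the cone of $\wt f$, with quotient $V$; a two-step argument then suffices.)

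The key steps, in order, are as follows. First I would write down the short exact sequence of complexes
\begin{equation}
\label{ses-cyl}
0\ \longrightarrow\ \bs W\oplus\wt V\ \longrightarrow\ \Cyl(f,\wt f)\ \stackrel{\pi_V}{\longrightarrow}\ V\ \longrightarrow\ 0,
\end{equation}
where the sub is stable under $\pa^{\Cyl}$ because $\pa^{\Cyl}(\bs w+\wt v)=\bs(-\pa w+\wt f(\wt v))+\pa\wt v$ lands back in $\bs W\oplus\wt V$, and observe that $\bs W\oplus\wt V$ with this differential is precisely $\Cone(\wt f)$ (up to the shift). Second, since $\wt f$ is a quasi-isomorphism, $\Cone(\wt f)$ is acyclic; plugging this into the long exact cohomology sequence of \eqref{ses-cyl} immediately yields that $\pi_V$ is a quasi-isomorphism. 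Third, by the symmetry of the construction — swapping the roles of $(V,f)$ and $(\wt V,\wt f)$, which is a genuine symmetry of \eqref{Cyl-f-wtf}–\eqref{diff-Cyl} up to sign and reordering — the same argument with the subcomplex $\bs W\oplus V$ (the cone of $f$, acyclic because $f$ is a quasi-isomorphism) shows $\pi_{\wt V}$ is a quasi-isomorphism. This proves the lemma; note that one only needs $\wt f$ a quasi-isomorphism to conclude $\pi_V$ is one, and only $f$ a quasi-isomorphism to conclude $\pi_{\wt V}$ is one, so the hypotheses are even slightly redundant for each individual conclusion.

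There is no real obstacle here; the only thing requiring care is the bookkeeping of signs and suspensions to confirm that the subcomplex in \eqref{ses-cyl} really is (an iterated shift of) the mapping cone of $\wt f$ with the standard twisted differential, so that its acyclicity under the quasi-isomorphism hypothesis is the textbook fact. I would state the cone identification explicitly, verify $\pa^{\Cyl}$ restricts and descends as claimed in \eqref{ses-cyl}, cite the long exact sequence of a short exact sequence of complexes, and invoke that a cone is acyclic iff the map is a quasi-isomorphism. If one prefers to avoid even mentioning cones, an equivalent and fully self-contained alternative is a direct diagram chase: given a cocycle $v\in V$ with $[v]$ mapped to zero, i.e. $v=\pa v'$ some lift argument — but the cone formulation is shorter and less error-prone, so that is the route I would take.
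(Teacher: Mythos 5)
Your argument is correct, but it follows a genuinely different route from the paper's. The paper proves the lemma by a direct chase at the level of cocycles and coboundaries: it shows $H(\pi_V)$ is surjective by lifting a cocycle $v\in V$ to $v+\bs w - v'$ using that $f(v)$ is cohomologous to $\wt f(v')$, and injective by explicitly correcting a cocycle $v+\bs w+v'$ with $v$ exact by coboundaries until it is visibly exact, then appeals to the $V\leftrightarrow\wt V$ symmetry. You instead package the same content structurally: the kernel of $\pi_V$ is the subcomplex $\bs W\oplus\wt V$, which (up to shift and an inessential sign) is the mapping cone of $\wt f$, hence acyclic when $\wt f$ is a quasi-isomorphism, and the long exact sequence of the short exact sequence \eqref{ses-cyl} finishes the job; symmetrically for $\pi_{\wt V}$ with $\Cone(f)$. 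Your check that $\bs W\oplus\wt V$ is stable under $\pa^{\Cyl}$ is right, and the identification with a (shifted) cone is sound once the sign/shift bookkeeping you flag is carried out. Your approach is shorter, less error-prone with signs, and makes transparent the refinement you note — that $\pi_V$ is a quasi-isomorphism as soon as $\wt f$ is, and $\pi_{\wt V}$ as soon as $f$ is — a refinement that is also implicit in the paper's chase but not stated there. What the paper's elementary argument buys in exchange is complete self-containedness: it never invokes cones or the long exact sequence, which fits the expository style of the appendix. Either proof is acceptable; if you keep yours, do state the cone identification and the acyclicity criterion explicitly, as you propose.
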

\begin{proof}
\smartqed
Let us prove that  $\pi_{\cV}$ is surjective on the level of cohomology. 

For this purpose, we observe that for every cocycle $v \in V$
its image $f(v)$ in $W$ is cohomologous to some cocycle of 
the form $\wt{f}(v')$, where $v'$ is a cocycle in $\wt{V}$. 
The latter follows easily from the fact that $f$ and $\wt{f}$ are 
quasi-isomorphisms.

In other words, for every degree $n$ cocycle $v \in V$ there exists 
a degree $n$ cocycle $v' \in V$ and a degree $(n-1)$ vector $w \in W$ such that
\begin{equation}
\label{f-wtf-w}
f(v) - \wt{f}(v') - \pa w = 0\,.  
\end{equation}

Hence, $v + \bs w - v'$ is a cocycle in $\Cyl(f, \wt{f})$ such that 
$\pi(v + \bs w - v') = v$\,.

Let us now prove that  $\pi_{\cV}$ is injective on the level of cohomology. 

For this purpose, we observe that the cocycle 
condition for $v + \bs w + v'  \in \Cyl(f, \wt{f})$ is equivalent to
the three equations: 
\begin{equation}
\label{v-cocycle}
\pa v = 0\,,
\end{equation} 
\begin{equation}
\label{v-pr-cocycle}
\pa v' = 0\,,
\end{equation}
and
\begin{equation}
\label{v-w-v-pr}
f(v) + \wt{f}(v') - \pa w = 0\,.
\end{equation}

Therefore, for every cocycle $v + \bs w + v'  \in \Cyl(f, \wt{f})$, 
the vectors $v$ and $v'$ are cocycles in $V$ and $\wt{V}$, respectively, 
and the cocycles $f(v)$ and $-\wt{f}(v')$ in $W$ are cohomologous.

Hence, $v + \bs w + v'  \in \Cyl(f, \wt{f})$ is a cocycle 
and $v$ is exact then so is $v'$, i.e. there exist vectors 
$v_1 \in V$ and $v'_1 \in \wt{V}$ such that 
$$
v = \pa v_1\,, \qquad v' = \pa v'_1\,.
$$  

Subtracting the coboundary of $v_1 \oplus \bs 0 \oplus  v'_1$ from 
$v \oplus \bs w \oplus v' $ we get a cocycle in $\Cyl(f, \wt{f})$ of the form 
\begin{equation}
\label{new-cocycle}
0 \oplus \bs(w -f(v_1) - \wt{f}(v'_1)) \oplus 0
\end{equation}

Since $w -f(v_1) - \wt{f}(v'_1)$ is a cocycle on $W$ and $\wt{f}$ is 
a quasi-isomorphism, there exists 
a cocycle $\wt{v} \in \wt{V}$ and a vector $w_1 \in W$ such that 
\begin{equation}
\label{w-v1-vpr-1}
w -f(v_1) - \wt{f}(v'_1) - \wt{f} ( \wt{v} )  - \pa(w_1) = 0\,. 
\end{equation}

Hence the cocycle \eqref{new-cocycle} is the coboundary of 
$$
0 \oplus ( - \bs w_1) \oplus \wt{v} \in \Cyl(f, \wt{f})\,.
$$

Thus $\pi_{V}$ is indeed injective on the level of cohomology.

Switching the roles $V \leftrightarrow \wt{V} $,  $f \leftrightarrow \wt{f} $, 
and  $\pi_{V} \leftrightarrow \pi_{\wt{V}} $ we also prove the desired 
statement about $\pi_{\wt{V}}$\,.
\qed
\end{proof}

%
%
%
%

\end{document}